      \theoremstyle{plain}
      \newtheorem{theorem}{Theorem}[section]
      \newtheorem{lemma}[theorem]{Lemma}
      \newtheorem{assumption}[theorem]{Assumption} 		
	  \newtheorem{proposition}[theorem]{Proposition}
      \theoremstyle{definition}
      \newtheorem{definition}[theorem]{Definition}
      \theoremstyle{remark}
      \newtheorem{remark}[theorem]{Remark}
	  \newtheorem{example}[theorem]{Example}
      \def\@setcopyright{}
      \def\serieslogo@{}
\newcommand{\tr}{\operatorname{tr}}
\newcommand{\N}{{\mathbb N}}
\newcommand{\Z}{{\mathbb Z}}
\newcommand{\R}{{\mathbb R}}
\renewcommand{\P}{{\mathbb{P}}}
\newcommand{\E}{{\mathbb{E}}}
\newcommand{\cov}{\mathrm{Cov}}
\begin{document}

 \author{Matthias L\"owe}
   \address{Inst. for Math. Stat., Univ. M\"unster, Orl\'{e}ans Ring 10, 48149 M\"unster, Germany}
   \email{maloewe@uni-muenster.de}

   \author{Kristina Schubert}
   \address{Inst. for Math. Stat., Univ. M\"unster, Orl\'{e}ans Ring 10, 48149 M\"unster, Germany}
   \email{kristina.schubert@uni-muenster.de}

\title[Spectrum of Random Matrices filled with Stochastic Processes]{On the Limiting Spectral Density of Random Matrices filled with Stochastic Processes}
%
%
%
   \begin{abstract}
We discuss the limiting spectral density of real symmetric random matrices. Other than in standard random matrix theory the upper diagonal entries are not assumed to be independent, but we will fill them with the entries of a stochastic process. Under assumptions on this process, which are satisfied, e.g., by stationary Markov chains on finite sets, by stationary Gibbs measures on finite state spaces, or by Gaussian Markov processes, we show that the limiting spectral distribution depends on the way the matrix is filled with the stochastic process.
If the filling is in a certain way compatible with the symmetry condition on the matrix, the limiting law of the empirical eigenvalue distribution is the well known semi-circle law. For other fillings we show that the semi-circle law cannot be the limiting spectral density.
\end{abstract}
\subjclass[2010]{60B20; 60F05}
\keywords{random matrix, dependent entries, stochastic processes, Wigner's semi-circle law}

   \maketitle

\setcounter{equation}{0}

\section{Introduction}
A central aspect in the study of random matrices with growing dimension is the analysis of their  eigenvalue distribution.
To introduce them let, for any $N\in\N$,  $\left\{X_N(i,j), 1\leq i\leq j\leq N\right\}$ be a real valued random field.
Define the symmetric random $N\times N$ matrix $A_N$ by
\begin{equation*}
A_N (i,j) = \frac{1}{\sqrt{N}} X_N(i,j), \qquad 1\leq i\leq j\leq N.
\end{equation*}
We will denote the (real) eigenvalues of $ A_{N}$ by $\lambda_1^{(N)} \le \lambda_2^{(N)} \le \ldots \le \lambda_N^{(N)}$. Let $\mu_N$ be the empirical eigenvalue distribution, i.e.
\begin{equation*}
\mu_N = \frac{1}{N} \sum_{k=1}^N \delta_{\lambda_k^{(N)}}.
\end{equation*}
In  \cite{Wigner_ursprung} and \cite{Wigner} Wigner proved that, if $X_N(p,q), 1 \leq p \leq q \leq N,$ are independent random variables with expectation 0, and either Bernoulli or normally distributed with variance 1 for off-diagonal elements and variance 2 on the diagonal, the empirical eigenvalue distribution $\mu_N$ converges weakly in probability to the so called semicircle distribution (or law), i.e.~the probability distribution $\nu$ on $\mathbb{R}$ with density
\begin{equation}\label{semi-circ}
\nu(dx)= \frac 1{2\pi} \sqrt{4-x^2} \mathbf{1}_{|x|\le 2}.
\end{equation}

Arnold \cite{Arnold} generalized this result by showing that
the convergence to the semicircle law is also  true, if one replaces the Bernoulli or Gaussian distribution by the assumption that the random variables are independent and identically distributed (i.i.d.) and have a finite fourth moment. This observation can be considered the starting point of a collection of results showing the universality of the semicircle law. For example, also the identical distribution of the random variables may be replaced by a weaker assumptions (see e.g.~\cite{erdoes_survey}). Moreover, Erd{\H{o}}s et al.~{\cite{ESY}) and Tao and Vu \cite{taovu2} observed  that the convergence of the empirical eigenvalue distribution towards the semi-circle law holds under general assumptions in a local sense.
The interested reader is referred to \cite{taovu} for more results on universality of the semi-circle law, in particular the so called ``four moment theorem''.

Another generalizations of Wigners semi-circle law deals with matrix ensembles with entries realized according to weighted Haar measures on classical
(e.g.~orthogonal, unitary, symplectic) groups. Such results lay a bridge between classical and non-commutative probability
(see e.g.~\cite{alice_stflour}, or the recommendable book by Anderson, Guionnet, and Zeitouni \cite{agz}).

A different approach to universality was taken in \cite{Schenker_Schulz-Baldes}, \cite{gotze_tikho} and \cite{Diag}. In all these articles random matrices with correlated entries are studied. While the first of these papers imposes the condition that the number of correlated matrix entries satisfies a certain bound, the second assumes that the entries of the random matrix satisfy a certain martingale condition. Moreover, \cite{gotze_tikho} establishes a Stein approach to the semicircle law. In \cite{Diag} it  is shown that, if the diagonals of $X_N$ are independent and the correlation between elements along a diagonal decays sufficiently quickly, again the limiting spectral distribution is the semi-circle law.
This result has to be compared with the situation in \cite{FL_CW} and \cite{FL_CW2}, where the diagonals are still independent, but the random variables along a diagonal are exchangeable. Here again, for weak correlations one finds the semicircle as the limiting spectral density for the eigenvalues. However, for stronger correlations the limiting spectral measure is a free convolution of the semicircle law with the limit measure for the eigenvalues of a normalized Toeplitz matrix, which was found in \cite{brycdembo}. This means that one observes a kind of phase transition in the limiting spectral density. A similar results for matrices of Hankel type was shown in \cite{Schubert_Hankel}. 
In \cite{Kirsch_et_al} the authors generalize the conditions from \cite{FL_CW} by assuming that now the entire upper diagonal entries of the matrix are exchangeable. Again, if the
correlations decay quickly enough, the limiting spectral distribution is the semicircle law. This is, to the best of our knowledge, one of the rare occasions that  the semi-circle law has been shown for a random matrix without any independence assumptions.

The results in \cite{Diag} and \cite{Kirsch_et_al} immediately raise the following question: What is the limiting spectral distribution of a random matrix, if all the upper diagonal entries are correlated but not exchangeable. Considering the matrix entries as realizations of a stochastic process,  a natural assumption is again that the correlations decay quickly, e.g.~like the correlations in a Markov chain. This is exactly the problem we are going to attack in the present paper. In a nutshell, the result is that under natural assumptions, e.g.~centered entries if the stochastic process is a Markov chain, the semicircle law will be the limiting spectral distribution. However, this results depends on the way the matrix is filled with the realization of the stochastic process. We will need a condition that is e.g.~satisfied, if the stochastic process follows the diagonals of the random matrix. We will also show that there are Markov chains (even on $\{-1, +1\}$) that, if filled into a matrix row- or column-wise, lead to a spectral distribution that does not converge to the semicircle law.

We organize this paper in the following way: The second section contains our basic definitions together with the central results. In Section 3 we focus on the assumptions for our results. We show that suitably chosen Markov chains,  Gibbs measures and Gaussian Markov processes are examples for the stochastic processes that appear in our main theorem and we consider the filling, where the stochastic process follows the diagonals of the matrix. Section 4 contains the proofs of our results.

\section{Random Matrices with Entries from a Stochastic Process}
Given a stochastic process $(Z_n)_n$, there are, of course, various ways to fill them into a (symmetric) random matrix. The following definition formalizes these ways. It also introduces a distance on the indices of the matrix, which will enable us to formulate a condition, under which the spectral distribution of the corresponding matrices converges to the semicircle law. This distance is induced by the filling of the matrix.

\begin{definition}\label{Def_filling}\text{}
\begin{enumerate}
\item[(a)]
A  symmetric matrix $X_N \in \mathbb R^{N \times N}$ is generated by a stochastic process $(Z_n)_n$ and a mapping $\varphi_N$, if
\begin{enumerate}
\item[(i)] $\varphi_N: \{ 1,\ldots, N(N+1)/2\} \to \{(i,j) \in  \{1,\ldots, N\}^2 : i \leq j\}$ is bijective,
\item[(ii)] the matrix entries (in the upper triangular part of the matrix) are given by  $(Z_n)_n$ via
$$X_N(i,j)=Z_{\varphi_N^{(-1) }(i,j)}, \quad 1\leq i\leq j\leq N.$$
\end{enumerate}
\item[(b)] We measure the distance between two matrix indices $(i,j)$ and $(i',j')$ of a $N\times N$ matrix in terms of their distance in the stochastic process, i.e.~we set
$$ \|(i,j)- (i',j') \|_{\varphi_N} \coloneqq |\varphi_N^{(-1) }(i,j)-\varphi_N^{(-1) }(i',j') |, \quad i\leq j, i' \leq j'.$$
If $i>j$ or $i'>j'$, we use $(j,i)$ resp.~$(j',i')$ instead to calculate the distance.
\item[(c)] We call matrix indices $(i,j), (i',j') \in \{1,\ldots N\}^2$, $i \leq j$, $i' \leq j'$ (horizontal or vertical, respectively) neighbors if
\begin{itemize}
\item $i=i'$ and $j\in \{j'+1, j'-1\}$  or
\item  $j=j'$ and $i\in \{i'+1, i'-1\}$, respectively.
\end{itemize}
Considering the path through the upper triangular part of a matrix prescribed by the mapping $\varphi_N$, we denote the number of  steps to the horizontal or vertical neighboring matrix entry by
$$J(\varphi_N) \coloneqq \# \left \{1\leq i < \frac{ N(N+1)}{2}: \varphi_N(i) \text{ and }\varphi_N(i+1) \text{ are neighbors} \right \}.$$
\end{enumerate}
\end{definition}
In our main theorem we study the convergence of the empirical eigenvalue distribution of matrices generated by a stochastic process $(Z_n)_n$ with correlations that decay quickly  and a mapping $\varphi_N$.
We provide conditions on the mapping $\varphi_N$ and the stochastic process $(Z_n)_n$ under which $\mu_N$ converges weakly in probability 
to Wigner's semicircle law $\nu$.
These condition are:
\begin{assumption}\label{schranke_eintr_1}
Assume that the mapping $\varphi_N$ satisfies
\begin{equation*}
\forall i,j, n :\# \{ x \in \{1,\ldots N\} : \|(i,x)- (x,j) \|_{\varphi_N} =n\} =o(N).
\end{equation*}
\end{assumption}
\begin{assumption}\label{bedingungen_proc}
Let $(Z_n)_{n \in \N} $ be a stochastic process.
Assume the following:
\begin{enumerate}
\item[(i)]
for all $i \in \mathbb N$ we have $\mathbb E(Z_i^2)=1$,
\item[(ii)]
for all odd numbers $k$ and all $i_1, \ldots, i_k$ we have that
\begin{equation}
\label{moment_cond1}
\mathbb E (Z_{i_1} Z_{i_2} \ldots Z_{i_k})=0,
\end{equation}
\item[(iii)] for all even $k$ there are constants $C>0$ and $\beta \in[0,1)$ such that for all $i_1 \le i_2 \le \ldots \le i_k$ with  $n_j \coloneqq i_{j+1}-i_j$
 we have
\begin{equation}
\label{Beh_EWert}
\left|\mathbb E (Z_{i_1} Z_{i_2} \ldots Z_{i_k})\right|   \le C \beta^{n_1+n_3+\ldots + n_{k-1}},
\end{equation}
for $j_1 \le \ldots \le j_k$ with $d \coloneqq \min_{n,m \in \{1,\ldots,k\}}|i_n-j_m|$  we have
 \begin{equation}
\label{est_corr}
\left| \mathbb E (Z_{i_1} \ldots Z_{i_{k}} Z_{j_1} \ldots Z_{j_{k}}) - \mathbb E (Z_{i_1} \ldots Z_{i_{k}}) \mathbb E (Z_{j_1} \ldots Z_{j_{k}})   \right| \leq C  \beta^{d},
\end{equation}
and for  $d'\coloneqq \min_{j=1,3,5\ldots} i_{j+2}-i_j$ we have
\begin{equation}
\label{est_squares_2}
\left| \mathbb E (Z_{i_1}^2 Z_{i_3}^2 \ldots Z_{i_{k-1}}^2)-1 \right| \leq C  \beta^{d'}.
\end{equation}
\end{enumerate}
\end{assumption}

\begin{remark}
If, instead of \eqref{est_corr} we assume that for all $k$ there exists $C>0$ and $\beta \in [0,1)$ such that for all $i_1 \le i_2 \le \ldots \le i_k$ and $j_1 \le \ldots \le j_l$ with $l \leq k$  and $d \coloneqq \min_{\substack{ n\in \{1,\ldots,k\} \\ m \in \{1, \ldots, l\}}}|i_n-j_m|$  we have
 \begin{equation*}
\left| \mathbb E (Z_{i_1} \ldots Z_{i_{k}} Z_{j_1} \ldots Z_{j_{l}}) - \mathbb E (Z_{i_1} \ldots Z_{i_{k}}) \mathbb E (Z_{j_1} \ldots Z_{j_{l}})   \right| \leq C  \beta^{d},
\end{equation*}
the estimate \eqref{est_squares_2} follows automatically. However, all these assumptions are met by our examples.
\end{remark}
With these assumptions our main theorem reads as follows.
\begin{theorem}\label{main_theo}
Let
$X_N \in \mathbb R^{N \times N}$ be symmetric matrices generated by a stochastic process $(Z_n)_n$ that fulfills Assumption \ref{bedingungen_proc} and a filling $\varphi_N$ that satisfies Assumption \ref{schranke_eintr_1}.
Consider the rescaled matrix $A_N(i,j)\coloneqq \frac{1}{\sqrt{N}} X_N (i,j)$.
Then, 
the empirical spectral distribution $\mu_N$ of $A_N$
converges weakly in probability to the semicircle law $\nu$ as given by (\ref{semi-circ}).
\end{theorem}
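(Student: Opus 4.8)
The plan is to establish convergence of $\mu_N$ to $\nu$ via the moment method: it suffices to show that for every fixed $k\in\N$,
\begin{equation*}
\E\left(\frac{1}{N}\tr A_N^k\right)\longrightarrow m_k,\qquad \cov\left(\frac{1}{N}\tr A_N^k,\frac{1}{N}\tr A_N^k\right)\longrightarrow 0,
\end{equation*}
where $m_k$ are the moments of the semicircle law (so $m_{2\ell}=C_\ell$, the $\ell$-th Catalan number, and $m_{2\ell+1}=0$), and then conclude weak convergence in probability by a standard Borel--Cantelli / Chebyshev argument together with the fact that $\nu$ is determined by its moments. Writing out the trace,
\begin{equation*}
\E\left(\frac{1}{N}\tr A_N^k\right)=\frac{1}{N^{1+k/2}}\sum_{i_1,\ldots,i_k} \E\bigl(X_N(i_1,i_2)X_N(i_2,i_3)\cdots X_N(i_k,i_1)\bigr),
\end{equation*}
so the combinatorial task is to identify which closed walks $i_1\to i_2\to\cdots\to i_k\to i_1$ on $\{1,\ldots,N\}$ contribute in the limit.

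The key steps, in order: First, odd $k$ vanishes immediately by the moment condition \eqref{moment_cond1}. Second, for even $k=2\ell$, I would decompose the sum according to the multigraph on the vertex set $\{i_1,\ldots,i_k\}$ whose edges are the (unordered) index pairs $\{i_t,i_{t+1}\}$ traversed by the walk. As in the classical Wigner argument, a walk with $r$ distinct vertices contributes a factor of order $N^{r}$ from the free index choices against the normalization $N^{-(1+k/2)}$, so only walks with $r\le \ell+1$ can survive, and among those, the ones with exactly $\ell+1$ distinct vertices are precisely the ones whose edge multigraph is a tree in which every edge is traversed exactly twice (the Catalan/Dyck paths). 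The crucial new point is to control the \emph{expectation} $\E\bigl(\prod_t X_N(i_t,i_{t+1})\bigr)$ using Assumption~\ref{bedingungen_proc}: when each edge is used exactly twice, the product is $\prod_{e}Z_{\varphi_N^{-1}(e)}^2$, and \eqref{est_squares_2} says this expectation differs from $1$ by at most $C\beta^{d'}$ where $d'$ measures the spread of the indices $\varphi_N^{-1}(e)$ in the process; summing the error over all such walks and using $\sum_{n}\beta^n<\infty$ shows the total error is $o(1)$, so each tree-type walk contributes $1+o(1)$, recovering the Catalan number $C_\ell$. For the remaining walks --- those with a repeated edge used $\ge 4$ times, or an odd number of times, or with fewer than $\ell+1$ vertices --- one combines the crude count of such walks (which is $o(N^{1+\ell})$) with the uniform bound \eqref{Beh_EWert} on the relevant moments to see their contribution is negligible; here is where Assumption~\ref{schranke_eintr_1} enters, to handle configurations where two indices $i_t,i_s$ coincide after the walk has "pinched," i.e.\ to bound $\#\{x:\|(i,x)-(x,j)\|_{\varphi_N}=n\}=o(N)$ so that a constrained vertex still leaves enough room but the $\beta^n$-decay is summable. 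Third, for the variance I would expand $\E\bigl((\tr A_N^k)^2\bigr)-\bigl(\E\tr A_N^k\bigr)^2$ as a sum over pairs of closed walks and use \eqref{est_corr} to show that pairs of walks which are "far apart" in the process essentially factorize (contributing the product term, which cancels), while pairs that come "close" are too few (again by the vertex-count $o(N)$ bookkeeping) to matter at the scale $N^2$; this yields $\cov\to 0$.

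The main obstacle I expect is the error analysis in the second step: in the classical i.i.d.\ case a single repeated or over-used edge kills the whole term exactly, whereas here I only have the quantitative decay estimates \eqref{Beh_EWert}--\eqref{est_squares_2}, which depend on the positions $\varphi_N^{-1}(i,j)$ rather than on the matrix positions $(i,j)$. Bridging this requires translating graph-combinatorial statements about the walk (edges traversed twice, pinch points, etc.) into statements about the spread of the corresponding process-indices, and Assumption~\ref{schranke_eintr_1} is precisely the hypothesis that makes the relevant sums $\sum_x\beta^{\|(i,x)-(x,j)\|_{\varphi_N}}=o(N)$ convergent-after-normalization; organizing these estimates so that the total error over all $O(N^{1+\ell})$ walks is genuinely $o(1)$, uniformly in the combinatorial type, is the technical heart of the proof. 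A secondary point requiring care is the diagonal entries $X_N(i,i)$: they appear when a walk step is a self-loop $i_t=i_{t+1}$, but such walks have too few distinct vertices to contribute, so as usual they can be absorbed into the error term without a separate variance assumption on the diagonal.
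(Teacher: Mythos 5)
Your strategy --- moment method, odd moments vanishing by \eqref{moment_cond1}, pair walks contributing Catalan numbers up to an $o(1)$ error controlled by \eqref{est_squares_2} and the summability of $\beta^n$, covariance controlled via \eqref{est_corr} --- is the same as the paper's proof of Lemma~\ref{momentlemma}, and your analysis of the tree/pair term is correct in detail.

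The weak point is the treatment of the residual walks, those having an edge traversed an odd number of times. Your claim that the crude count of these walks is $o(N^{1+k/2})$ is false: a closed walk of length $k$ in which all traversed edges are distinct already accounts for $\Theta(N^k)$ tuples, so the set $S_k^*$ is much larger than the normalization. These walks are negligible only because of the exponential decay in \eqref{Beh_EWert}, and the real task is to show that, for fixed process-index gaps $n_1,n_3,\ldots$ (at least one nonzero) and a fixed combinatorial type $\pi$, the number $m(n_1,n_3,\ldots,\pi)$ of contributing walks is $o(N^{k/2+1})$; summing this against $\beta^{n_1+n_3+\cdots}$ then gives $o(1)$. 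To organize this count the paper switches combinatorial frame: $\pi$ is a pair partition on the \emph{positions} $\{1,\ldots,k\}$ recording which factors $X_N(P_i)$, $X_N(P_j)$ land in adjacent odd/even slots after sorting by process index $\varphi_N^{-1}(P_\cdot)$, not the edge-coincidence multigraph of the walk you are using. The bound on $m$ is then proved separately for crossing partitions, where a graph-traversal argument yields $O(N^{k/2})$ with no hypothesis on the filling, and for non-crossing partitions, where one peels off adjacent blocks with $n_i=0$ and invokes Assumption~\ref{schranke_eintr_1} exactly once at a block with $n_i>0$ to save a factor of $o(N)$. You do correctly identify the precise form in which Assumption~\ref{schranke_eintr_1} is used, but without the pair-partition-on-sorted-positions bookkeeping and the crossing/non-crossing dichotomy your sketch gives no route to actually bound $m$. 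As a minor point, the variance step does not use Assumption~\ref{schranke_eintr_1} at all: once $P$ is chosen, the constraint $d(P,Q)=\delta$ already pins down one vertex of $Q$ to a bounded set, which is enough to beat the normalization $N^{k+2}$.
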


\begin{remark}
Conditions \eqref{Beh_EWert},  \eqref{est_corr}, and  \eqref{est_squares_2} of Assumption \ref{bedingungen_proc} ensure that the correlations of entries of the stochastic process decay sufficiently fast.
A prominent example, in which these conditions are satisfied, is the case where $(Z_n)_n$ is an ergodic Markov chain on a finite subset of $\R$ started in its stationary measure $\rho$. I.e.~$\P^{Z_1} = \rho$, and additionally we assume that \eqref{moment_cond1} holds true.
The verification of \eqref{Beh_EWert}, \eqref{est_corr}, and  \eqref{est_squares_2}  in this case is given in Lemma \ref{MClemma1} below.

However, there also other interesting processes that satisfy conditions \eqref{Beh_EWert}, \eqref{est_corr}, and \eqref{est_squares_2}, e.g.~certain one-dimensional Gibbs measures on finite state spaces. Moreover, also certain Gaussian processes fall into the realm of Theorem \ref{main_theo}. These examples will also be treated in the next section.

Regarding the mapping $\varphi_N$, we will see in Example \ref{example_diag}  and Lemma \ref{lemma_diag} that Assumption \ref{schranke_eintr_1} is non-empty. It is e.g.~satisfied if we fill the matrix with the entries of the stochastic process one diagonal after another (starting with the main diagonal and always proceeding from top to bottom), i.e.~$\varphi_N(1)=(1,1), \varphi_N(2)=(2,2), \ldots, \varphi_N(N)=(N,N),\varphi_N(N+1)=(1,2), \varphi_N(N+2)=(2,3), \ldots .$ In view of the results in \cite{Diag}, this is a natural filling. On the other hand, we will also see, that there are fillings for which the convergence from the previous theorem is not true.
\end{remark}
\begin{remark}
For a stochastic process that satisfies the conditions of Assumption~\ref{bedingungen_proc}, we can conclude a further estimate for the left hand side of \eqref{est_corr}:
Let $i_1 \leq \ldots \leq i_k$ and $j_1 \leq  \ldots \leq j_k$ and let $l_1 \leq \ldots \leq l_{2k}$ be given by
$(l_1,\ldots, l_{2k})\coloneqq \text{sort}(i_1,\ldots, i_k,j_1,\ldots, j_k)$, where $\text{sort}:\mathbb R^{2k} \to \mathbb R^{2k} $ denotes the function  that sorts the arguments in increasing order.
We claim that for $L \coloneqq (l_2-l_1)+(l_4-l_3)+\ldots + (l_{2k}-l_{2k-1})$ we have
\begin{align}
&\left| \mathbb E (Z_{i_1} \ldots Z_{i_{k}} Z_{j_1} \ldots Z_{j_{k}}) - \mathbb E (Z_{i_1} \ldots Z_{i_{k}}) \mathbb E (Z_{j_1} \ldots Z_{j_{k}})   \right| \nonumber
\\
\leq &
\left| \mathbb E (Z_{i_1} \ldots Z_{i_{k}} Z_{j_1} \ldots Z_{j_{k}})| + | \mathbb E (Z_{i_1} \ldots Z_{i_{k}})| \cdot | \mathbb E (Z_{j_1} \ldots Z_{j_{k}})   \right|
\label{est_joint_indices}
\\\leq & C \beta^{L}.\label{est_joint_indices_2}
\end{align}
The estimate for the first term in \eqref{est_joint_indices} is obvious from \eqref{Beh_EWert}. For the second term in \eqref{est_joint_indices} we use again \eqref{Beh_EWert} for each of the two factors together with
\begin{multline*}
(i_2-i_1)+(i_4-i_3)+ \ldots + (i_k-i_{k-1})\\
 \quad +(j_2-j_1)+(j_4-j_3)+ \ldots + (j_k-j_{k-1}) \geq L.
\end{multline*}
This can be seen by iterating the simple argument that for any $a\leq b$ and $a' \leq  b'$ we have
\begin{equation*}
(b-a) + (b'-a') \geq \begin{cases} (a'-a) + (b-b') & a \leq a' \leq b' \leq b \\ (a'-a)+(b'-b)  & a \leq a' \leq b \leq b' \end{cases}.
\end{equation*}
Combining \eqref{est_corr} and \eqref{est_joint_indices_2} then gives
\begin{align}
\left| \mathbb E (Z_{i_1} \ldots Z_{i_{k}} Z_{j_1} \ldots Z_{j_{k}}) - \mathbb E (Z_{i_1} \ldots Z_{i_{k}}) \mathbb E (Z_{j_1} \ldots Z_{j_{k}})   \right|
& \leq C \beta^{\max(d,L)} \nonumber
\\
& \leq C \beta^{\frac{1}{2} (d + L)}.
\label{est_corr_combined}
\end{align}
\end{remark}
The number of ways to prove a result like Theorem \ref{main_theo} is limited. As it is well known that
Wigner's semicircle law $\nu$ is uniquely determined by its moments, we will use the method of moments, which is one of the fundamental tools when proving limit theorems for dependent random variables. The central step in the proof is thus to show that the moments of the empirical spectral measure converge to the moments of $\nu$. These  moments are given by the Catalan numbers  $\kappa_k \coloneqq \frac{1}{k+1} \binom{2k}{k}$, i.e.~(see e.g.~\cite{agz})
\begin{equation*}
\int x^k d\nu(x) =\begin{cases} \kappa_{k/2},& k \text{ even} \\ 0,& \text{ otherwise.} \end{cases}
\end{equation*}

We note that Assumption \ref{schranke_eintr_1} of Theorem \ref{main_theo} is sufficient but not necessary. In the following proposition, we provide a condition on the mapping $\varphi_N$, under which the empirical eigenvalue distribution does not converge to the semi-circle law, if the underlying process is e.g.~a certain Markov chain with state space $\{-1,1\}$. This convergence is, of course, already impossible, if the fourth moment of the respective distribution does not converge to $\kappa_2$.

\begin{proposition}\label{non_conv}
Let $A_N$ be given as in Theorem \ref{main_theo}. Assume the following: 
\begin{enumerate}
\item[(i)]
There exists $c>0$ such that $J(\varphi_N) \geq c N^2$ for all $N \in \mathbb N$.
\item[(ii)]
There exist $\beta \in (0,1)$, and $C>0$ such that for all $i_1 \leq i_2 \leq i_3 \leq i_{4}$ and $n_j \coloneqq i_{j+1}-i_j$
\begin{equation}
\label{non_conv_condition1}
 \mathbb E (Z_{i_1} Z_{i_2}Z_{i_3}Z_{i_4} )= C \beta^{n_1+n_3}
\end{equation}
and
\begin{equation}
\label{non_conv_condition3}
 \mathbb E (Z_{i_1}^2 Z_{i_2}^2)=1.
\end{equation}
\end{enumerate}
Then we have  
$$\lim_{N \to\infty}\mathbb E \left( \frac{1}{N} \tr A_N^4 \right) \neq \kappa_2 .$$
Hence, the empirical spectral distribution $\mu_N$ of $A_N$ does not converge to $\nu$.
\end{proposition}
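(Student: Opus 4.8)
The plan is to compute $\mathbb{E}\bigl(\tfrac{1}{N}\tr A_N^4\bigr)$ directly and show that it exceeds $\kappa_2 = 2$ in the limit. Writing out the trace,
\[
\mathbb{E}\Bigl(\tfrac{1}{N}\tr A_N^4\Bigr) = \frac{1}{N^3} \sum_{i_1,i_2,i_3,i_4} \mathbb{E}\bigl(X_N(i_1,i_2) X_N(i_2,i_3) X_N(i_3,i_4) X_N(i_4,i_1)\bigr),
\]
where each index runs over $\{1,\ldots,N\}$ and $X_N(i,j)$ is understood via the symmetry convention. In the classical independent case only the pairings of the four factors survive, and after counting one gets exactly $\kappa_2$; here \eqref{non_conv_condition1} forces \emph{every} term to be nonnegative (it is $C\beta^{n_1+n_3} > 0$ for the sorted indices), so the sum can only be larger than the "pairing" contribution. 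Hence it suffices to identify the pairing contribution, check it already converges to $\kappa_2$, and then exhibit an additional positive contribution of order $N^3$ coming from the $J(\varphi_N)\geq cN^2$ assumption.

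First I would isolate the terms where the multiset $\{(i_1,i_2),(i_2,i_3),(i_3,i_4),(i_4,i_1)\}$ consists of two coincident pairs of matrix indices (the usual Wigner pairings). Using \eqref{non_conv_condition3}, $\mathbb{E}(Z_a^2 Z_b^2) = 1$, so each such term contributes exactly $1$, and a standard count — exactly as in the proof of Theorem \ref{main_theo}, which I may invoke — shows these terms sum to $\kappa_2 N^3 + o(N^3) = 2N^3 + o(N^3)$. (One has to be slightly careful that pairings which are "degenerate" in the filling, i.e.\ where the two coincident matrix index pairs happen to lie at distance $0$ anyway, are already included; but the count of genuine pairings is unaffected at leading order.) All remaining terms are of two types: those where the four matrix index pairs are not perfectly matched, and those I want to highlight. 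The key observation is that if $(i_a,i_{a+1})$ and $(i_{a+1},i_{a+2})$ are horizontal or vertical neighbors as matrix indices, then the path $\varphi_N$ need \emph{not} make them close in the process — but conversely, I want to choose a configuration that is cheap in the $\|\cdot\|_{\varphi_N}$-distance yet does \emph{not} reduce to a pairing.

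The cleanest way to manufacture the extra positive mass: by assumption (i) there are at least $cN^2$ indices $m$ for which $\varphi_N(m)$ and $\varphi_N(m+1)$ are neighbors, i.e.\ pairs of matrix entries $(p,q),(p',q')$ that are adjacent in the matrix (differing by one in a single coordinate) and at $\|\cdot\|_{\varphi_N}$-distance exactly $1$. For each such adjacent pair I can build closed length-$4$ walks $i_1 \to i_2 \to i_3 \to i_4 \to i_1$ on the index set so that the four edges visited are exactly these two matrix-index pairs, each used twice but in a "crossing" pattern rather than a matched pairing, and so that all four process-indices $\varphi_N^{(-1)}$ of the edges lie within a bounded window — making $n_1 + n_3$ bounded and hence $\mathbb{E}(Z_{i_1}Z_{i_2}Z_{i_3}Z_{i_4}) = C\beta^{n_1+n_3} \geq C\beta^{B}$ for a fixed $B$. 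Counting the number of such walks as $m$ ranges over the $\geq cN^2$ neighbor-steps and over the free index (there is one vertex of the walk not pinned by the edge, contributing a factor $N$, e.g.\ choosing $i_4$ freely with $i_1,i_2,i_3$ determined up to finitely many choices), one gets $\gtrsim cN^3$ such terms, each contributing at least $C\beta^B N^{-3}$, so a net extra contribution bounded below by a positive constant. Since every term in the full sum is nonnegative, nothing can cancel this, and therefore $\liminf_N \mathbb{E}(\tfrac{1}{N}\tr A_N^4) \geq 2 + \varepsilon$ for some $\varepsilon > 0$, in particular the limit (if it exists) is $\neq \kappa_2$; and since the $k=4$ moment of $\mu_N$ fails to converge to the fourth moment of $\nu$, $\mu_N$ cannot converge weakly in probability to $\nu$.

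The main obstacle is the bookkeeping in the last step: one must pin down a concrete family of closed $4$-walks whose edge multiset is $\{e, e, e', e'\}$ for a neighboring pair $e, e'$ but which is \emph{not} one of the matched pairings already counted in the $\kappa_2 N^3$ term — so that the extra $cN^3$ terms are genuinely additional rather than a recount — and one must verify that for these walks $n_1 + n_3$ stays bounded uniformly in $N$, using that $e$ and $e'$ are at process-distance $1$ and that the two "duplicated" edges in a $4$-walk with only two distinct edges are automatically at distance $0$. A secondary subtlety is making sure the freely-chosen vertex of the walk really gives a full factor $N$ while keeping the walk closed; this is where one uses that a neighboring pair shares a coordinate, so a $4$-walk of the form $a \to b \to a \to c \to a$ (with $\{a,b\}, \{a,c\}$ the two matrix-index pairs, requiring $a$ common) or a genuinely crossing pattern can be set up with one degree of freedom. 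Once the combinatorial family is fixed, the estimates are routine given \eqref{non_conv_condition1}, \eqref{non_conv_condition3}, and the nonnegativity they force.
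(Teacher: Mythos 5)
Your overall strategy — establish nonnegativity of every term via \eqref{non_conv_condition1}, show the matched pairings already contribute $\kappa_2 + o(1)$, then exhibit an additional positive contribution of order $1$ — is exactly the paper's strategy. However, the concrete family of walks you propose to carry the extra mass does not work, and this is the crux of the whole proposition.

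You insist on closed $4$-walks whose edge multiset is $\{e,e,e',e'\}$ (two distinct matrix entries each used twice) arranged in a ``crossing'' pattern. For $k=4$ this is impossible: if the edges $\{i_1,i_2\},\{i_2,i_3\},\{i_3,i_4\},\{i_4,i_1\}$ pair up as $\{1,3\},\{2,4\}$, i.e.\ $\{i_1,i_2\}=\{i_3,i_4\}$ and $\{i_2,i_3\}=\{i_4,i_1\}$, an easy case check shows all four edges must coincide, so one lands in the $\mathbb E[X_N(P_1)^4]$ terms, which are $o(1)$. The only non-degenerate way to use two distinct edges each twice in a closed $4$-walk is the matched pattern $a\to b\to a\to c\to a$, and those walks are \emph{exactly} the $S_4^0$ contribution already counted in the $\kappa_2$ term — so they cannot furnish ``extra'' mass. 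The paper's extra mass comes instead from walks with \emph{four distinct matrix entries}: take $p_1=i$, $p_3=i+1$, and choose $p_2,p_4$ so that $(p_1,p_2),(p_2,p_3)$ are $\|\cdot\|_{\varphi_N}$-adjacent and likewise $(p_3,p_4),(p_4,p_1)$. No two of $P_1,\dots,P_4$ coincide as matrix entries, yet \eqref{non_conv_condition1} (applied to four genuinely distinct process indices) gives $\mathbb E[X_N(P)] = C\beta^{n_1+n_3}\geq C\beta^2>0$, because the closeness is in the process, not an index coincidence. This is the key insight you are missing: the positive contribution does \emph{not} come from repeated matrix entries.

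A second, quantitative gap: even once the correct family is identified, counting it requires more than a free-vertex heuristic. The paper defines $x_i,y_i$ as the number of horizontal/vertical $\varphi_N$-neighbors in columns/rows $i,i{+}1$, notes $\sum_i (x_i+y_i)\geq cN^2$ (a restatement of $J(\varphi_N)\geq cN^2$), and then \emph{needs Cauchy--Schwarz} to pass to $\sum_i(x_i+y_i)^2 \geq N^{-1}\bigl(\sum_i(x_i+y_i)\bigr)^2 \geq cN^3$. Your count ``$\geq cN^2$ neighbor-steps times a free vertex giving $N$'' would fail if the neighbor-steps were concentrated on few values of $i$; the factor you need for each of the two pairs $(p_2$ and $p_4)$ is $x_i+y_i$, and nothing forces it to be of order $N$ for a positive fraction of $i$ without a convexity argument.
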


\begin{remark}\label{rem_filling}
The condition   $J(\varphi_N) \geq c N^2$  of Proposition \ref{non_conv} can be interpreted as follows: Considering the path in the upper triangular part of the matrix prescribed by $\varphi_N$, if there are `too many' steps from a matrix element to its horizontal or vertical neighbor, the limiting spectral density is not the semi-circle.

There are natural examples for fillings with this condition, the most prominent one is probably the mapping
\begin{eqnarray*}
&&\varphi_N(1)=(1,1), \varphi_N(2)=(1,2), \ldots, \varphi_N(N)=(1,N),\\ &&\qquad \varphi_N(N+1)=(2,2), \varphi_N(N+2)=(2,3), \ldots
\end{eqnarray*}
(i.e.~the stochastic process fills the upper triangular matrix row by row from left to right). For this particular mapping we have $J(\varphi_N)= \frac{N(N+1)}{2} - N+1=\frac{N(N-1)}{2}+1$.
\end{remark}

\section{Examples and a `diagonal' mapping $\varphi_N$}
In this section we show that the moment conditions in Assumption \ref{bedingungen_proc} are satisfied by certain examples, among them Markov chains on a finite state space, high temperature Gibbs measures in dimension one, and Gaussian processes. In Lemma \ref{lemma_expect}, we show that there are Markov chains on $\{1,-1\}$, that satisfy conditions \eqref{non_conv_condition1} and \eqref{non_conv_condition3}.
We further provide a mapping $\varphi_N$ that is valid in the sense of Assumption~\ref{schranke_eintr_1}. Hence, we show that none of our assumptions in Theorem \ref{main_theo} and Proposition \ref{non_conv} is empty. We start with the moment conditions in Assumption \ref{bedingungen_proc}.
\subsection{Markov chains}
\begin{lemma}\label{MClemma1}
Let $Z_n$ denote a stationary, ergodic Markov chain on a finite subset of $\R$ with invariant measure $\rho$. Assume that $\E (Z_i^2)=1$ for all $i \in \mathbb N$ and for all odd numbers $k$ and all $i_1, \ldots, i_k$ we have that
$$
\mathbb E (Z_{i_1} Z_{i_2} \ldots Z_{i_k})=0.
$$
Then  for all even $k$  there are constants $C>0$ and $\beta \in[0,1)$ such that for all $i_1 \le i_2 \le \ldots \le i_k$  it holds
\begin{equation}
\label{Beh_EWert2}
\left|\mathbb E (Z_{i_1} Z_{i_2} \ldots Z_{i_k})\right|   \le C \beta^{n_1+n_3+\ldots + n_{k-1}},\quad n_j \coloneqq i_{j+1}-i_j
\end{equation}
and
for $j_1 \le \ldots \le j_k$ with $d \coloneqq \min_{n,m \in \{1,\ldots,k\}}|i_n-j_m|$  we have
 \begin{equation}
\label{MC_correlations}
\left| \mathbb E (Z_{i_1} \ldots Z_{i_{k}} Z_{j_1} \ldots Z_{j_{k}}) - \mathbb E (Z_{i_1} \ldots Z_{i_{k}}) \mathbb E (Z_{j_1} \ldots Z_{j_{k}})   \right| \leq C  \beta^{d}.
\end{equation}
Moreover, for   $d'\coloneqq \min_{j=1,2,\ldots} i_{j+1}-i_j$
\begin{equation}
\label{second_moments}
\left| \mathbb E (Z_{i_1}^2 Z_{i_2}^2 \ldots Z_{i_{k}}^2)-1 \right| \leq C  \beta^{d'}.
\end{equation}  
\end{lemma}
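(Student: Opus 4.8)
We keep the notation of the lemma: $S\subset\R$ is the finite state space, $P$ the transition matrix of $(Z_n)$, $\rho$ the stationary distribution viewed as a row vector, $\mathbf 1$ the all-ones column vector, and $D$ the diagonal matrix carrying the states $s\in S$ on its diagonal. Since $(Z_n)$ is stationary and ergodic (in particular aperiodic), Perron--Frobenius theory yields constants $C_0>0$ and $\beta\in[0,1)$ with $\|P^n-\mathbf 1\rho\|\le C_0\beta^{\,n}$ for $n\ge1$ in operator norm; we also record $\|I-\mathbf 1\rho\|\le C_0$, the identities $\mathbf 1\rho\,P=P\,\mathbf 1\rho=\mathbf 1\rho=(\mathbf 1\rho)^2$, and hence $(P^n-\mathbf 1\rho)\mathbf 1\rho=\mathbf 1\rho(P^n-\mathbf 1\rho)=0$. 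The normalisations of the lemma read $\rho\mathbf 1=1$, $\rho D\mathbf 1=\E(Z_1)=0$ (the $k=1$ instance of the odd-moment hypothesis) and $\rho D^2\mathbf 1=\E(Z_1^2)=1$. Finally the Markov property gives, for $a_1\le\dots\le a_m$,
\[
\E(Z_{a_1}\cdots Z_{a_m})=\rho\,D\,P^{a_2-a_1}\,D\,P^{a_3-a_2}\cdots D\,P^{a_m-a_{m-1}}\,D\,\mathbf 1,\qquad P^0:=I ,
\]
and the rest of the proof is bookkeeping with this identity and the geometric bound above.

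For \eqref{Beh_EWert2} I would substitute $P^{n_j}=\mathbf 1\rho+M_j$ with $M_j:=P^{n_j}-\mathbf 1\rho$ (and $M_j:=I-\mathbf 1\rho$ when $n_j=0$), so that $\|M_j\|\le C_0\beta^{\,n_j}$ and $M_j\mathbf 1\rho=\mathbf 1\rho M_j=0$, and expand $\E(Z_{i_1}\cdots Z_{i_k})$ into $2^{k-1}$ terms indexed by the set $T\subseteq\{1,\dots,k-1\}$ of slots at which $\mathbf 1\rho$ is kept. For fixed $T$ the product splits, at every slot of $T$, into a product of blocks of the form $\rho\,D\,M_aD\cdots DM_b\,D\,\mathbf 1$; a block containing an odd number of factors $D$ vanishes, as one sees by a short induction on its length: peeling the leftmost $M=P^{\cdot}-\mathbf 1\rho$, the $\mathbf 1\rho$-part either splits off a factor $\rho D\mathbf 1=0$ or a strictly shorter odd block (zero by induction), while after all $M$'s are turned back into $P$'s the block equals a moment $\E(Z_{i_a}\cdots Z_{i_{b+1}})$ with an odd number of factors, which is $0$. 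Hence the surviving $T$ are precisely the subsets of the even slots $\{2,4,\dots,k-2\}$; for such $T$ every odd slot lies outside $T$, so the corresponding term is bounded by (a $k$-dependent constant times) $\beta^{\sum_{j\notin T}n_j}\le\beta^{\,n_1+n_3+\dots+n_{k-1}}$, and summing over the at most $2^{k}$ surviving terms gives \eqref{Beh_EWert2}.

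For \eqref{MC_correlations} I would sort the $2k$ indices into $l_1\le\dots\le l_{2k}$, call a slot $t$ \emph{crossing} if $l_t$ and $l_{t+1}$ come from different groups, and call a maximal block of consecutive same-group indices a \emph{run}. By the definition of $d$ every crossing gap is $\ge d$, while a gap of $\E(Z_{i_1}\cdots Z_{i_k})$ that separates two different $i$-runs straddles a $j$ and is therefore $\ge2d$. Running the expansion of the previous paragraph on $\E(Z_{i_1}\cdots Z_{j_k})=\rho\,D\,P^{l_2-l_1}D\cdots D\,\mathbf 1$: any term in which some crossing slot carries an $M_t$ is bounded by $C\beta^{d}$; the sum of the remaining terms (those with $\mathbf 1\rho$ at every crossing slot) factorises across the crossing slots into $\prod_{R}\E\big(\prod_{l\in R}Z_l\big)$, the product over all runs $R$. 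Applying the same expansion to $\E(Z_{i_1}\cdots Z_{i_k})$ and to $\E(Z_{j_1}\cdots Z_{j_k})$, but only at the gaps separating distinct $i$-runs, resp.\ $j$-runs, gives $\E(Z_{i_1}\cdots Z_{i_k})\,\E(Z_{j_1}\cdots Z_{j_k})=\prod_R\E\big(\prod_{l\in R}Z_l\big)+O(\beta^{2d})$. Subtracting proves \eqref{MC_correlations}; the constant depends only on $k$ since all operators in sight have bounded norm ($S$ is finite) and there are $\le2^{2k}$ terms (the case $d=0$ being trivial). Estimate \eqref{second_moments} is immediate from the same device: $\E(Z_{i_1}^2\cdots Z_{i_k}^2)=\rho\,D^2P^{n_1}D^2\cdots D^2\mathbf 1$ with each $n_j\ge d'$, and on expanding $P^{n_j}=\mathbf 1\rho+M_j$ the all-$\mathbf 1\rho$ term equals $(\rho D^2\mathbf 1)^k=1$ whereas every other term carries an $M_j$ of norm $\le C_0\beta^{\,d'}$.

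The one genuinely delicate point is the combinatorics underlying \eqref{MC_correlations}: pinning down exactly which terms survive once all crossing slots are ``opened up'', verifying that their sum is precisely the run-by-run product $\prod_R\E(\prod_{l\in R}Z_l)$, and then matching this against the independent product $\E(\prod Z_i)\,\E(\prod Z_j)$ up to an $O(\beta^{2d})$ error. Together with the vanishing-of-odd-blocks step used for \eqref{Beh_EWert2}, these are the only non-routine ingredients; everything else is operator-norm estimation built on $\|P^n-\mathbf 1\rho\|\le C_0\beta^{\,n}$.
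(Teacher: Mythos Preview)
Your proof is correct and runs parallel to the paper's, but through a different technical device. The paper works probabilistically, using the total-variation mixing bound $|\P(Z_{i_l}=\cdot\mid Z_{i_{l-1}},\ldots)-\P(Z_{i_l}=\cdot)|\le C\alpha^{i_l-i_{l-1}}$ to obtain $|\E(Z_{i_k}^{a_k}\cdots Z_{i_l}^{a_l}\mid Z_{i_{l-1}},\ldots)-\E(Z_{i_k}^{a_k}\cdots Z_{i_l}^{a_l})|\le C\alpha^{i_l-i_{l-1}}$; you instead work operator-theoretically via $\E(Z_{i_1}\cdots Z_{i_k})=\rho D P^{n_1}D\cdots D\mathbf 1$ and the spectral-gap bound $\|P^n-\mathbf 1\rho\|\le C_0\beta^n$. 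For \eqref{MC_correlations} and \eqref{second_moments} the two routes are essentially isomorphic: both factor across the same runs of $i$-indices and $j$-indices (the paper via iterated conditioning, you via inserting $\mathbf 1\rho$ at the crossing slots), and \eqref{second_moments} is a telescoping in the paper versus a direct expansion for you. The one genuine difference is in \eqref{Beh_EWert2}: the paper first gets $|\E(Z_{i_1}\cdots Z_{i_k})|\le C\alpha^{\max_l n_{2l-1}}$ (one gap at a time, using that the complementary odd moment vanishes) and then converts the maximum into a sum by taking $\beta:=\alpha^{1/k}$, so its $\beta$ depends on $k$; your block expansion together with the ``blocks with an odd number of $D$'s vanish'' lemma produces the product bound $C\beta^{n_1+n_3+\cdots+n_{k-1}}$ directly, with the spectral-gap $\beta$ independent of $k$. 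Both deliver the statement as written; your route is slightly sharper and more structural, the paper's slightly quicker to write down. One small remark: ergodicity of a stationary finite-state chain is equivalent to irreducibility and does not by itself force aperiodicity; the paper finesses this by citing a convergence theorem that also requires aperiodicity, so you are in good company reading ``ergodic'' here as ``irreducible and aperiodic''.
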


\begin{proof}
Call the state space of the Markov chain  $S=\{s_1, \ldots, s_m\}$. Note that by standard arguments for any $l \le k$
$$
\max_{s_1, s_2 \in S } |\P(Z_{i_l}=s_2|Z_{i_{l-1}}=s_1)-\P(Z_{i_l}=s_2)|\le C \alpha^{i_l-i_{l-1}}
$$
for some constant $C>0$ and some $\alpha \in [0,1)$ (for a proof see \cite{Peres}, Theorem 4.9). By the Markov property thus also for any $l \le k$
\begin{eqnarray*}
&&\max_{s_{j_1}, \ldots, s_{j_k} \in S } |\P(Z_{i_k}=s_{j_k}, \ldots, Z_{i_l}=s_{j_l}|Z_{i_{l-1}}=s_{j_{l-1}}, \ldots, Z_{i_{1}}=s_{j_{1}} )\\
&&\qquad \qquad -\P(Z_{i_k}=s_{j_k}, \ldots, Z_{i_l}=s_{j_l})|\le C \alpha^{i_l-i_{l-1}}.
\end{eqnarray*}
Taking into account the finiteness of the state space $S$ and that
\begin{eqnarray*}
&& \E(Z_{i_k}^{a_{i_k}} \cdots  Z_{i_l}^{a_{i_l}}|Z_{i_{l-1}}\ldots  Z_{i_1})=\\
&&\qquad \sum_{s_{j_k}, \ldots s_{j_1} \in S} s_{j_k}^{a_{i_k}} \cdots s_{j_l}^{a_{i_l}} \mathbbm{1}_{\{Z_{i_{l-1}}=s_{j_{l-1}}, \ldots, Z_{i_{1}}=s_{j_{1}}\}} \times \\
&& \hskip 2cm \times
\P(Z_{i_k}=s_{j_k}, \ldots, Z_{i_l}=s_{j_l}|Z_{i_{l-1}}=s_{j_{l-1}}, \ldots, Z_{i_{1}}=s_{j_{1}} )
\end{eqnarray*}
for any $l \le k$ and any $i_1 \le \ldots \le i_{l-1} \le i_l < i_{l+1}< \ldots < i_k$ , we obtain
\begin{equation}\label{mixed moments conditioned}
\left| \E(Z_{i_k}^{a_{i_k}} \cdots  Z_{i_l}^{a_{i_l}}|Z_{i_{l-1}}\ldots  Z_{i_1})-\E(Z_{i_k}^{a_{i_k}} \cdots  Z_{i_l}^{a_{i_l}})\right| \le C \alpha^{i_l-i_{l-1}}
\end{equation}
for all integers $a_{i_1},\ldots a_{i_l}$.

Now, since all odd mixed moments vanish, we obtain for the $2k$'th moments with $i_1 \le i_2 \le \ldots \le i_{2k}$  and all $l\le k$
\begin{multline*}
\mathbb E (Z_{i_1} Z_{i_2} \ldots Z_{i_{2k}})
 =  \mathbb E (Z_{i_1} Z_{i_2} \ldots Z_{i_{2l-1}}
 \\
  [\mathbb E (Z_{i_{2l}} Z_{i_{2l+1}} \ldots Z_{i_{2k}}|Z_{i_1} Z_{i_2} \ldots Z_{i_{2l-1}})-
\mathbb E (Z_{i_{2l}}  \ldots Z_{i_{2k}})]).
\end{multline*}
Thus \eqref{mixed moments conditioned} guarantees that
$$
\left| \mathbb E (Z_{i_1} Z_{i_2} \ldots Z_{i_{2k}})\right| \le C \alpha^{i_{2l}-i_{2l-1}}
$$
for all $l \le k$.
Therefore with $\beta \coloneqq \alpha^{1/k}$
\begin{equation*}
\left| \mathbb E (Z_{i_1} Z_{i_2} \ldots Z_{i_{2k}})\right| \le C \alpha^{\max_{1 \le l \le k} \{i_{2l}-i_{2l-1}\}} \le C \prod_{l=1}^k \beta^{i_{2l}-i_{2l-1}}.
\end{equation*}
This completes the proof of \eqref{Beh_EWert2}.

We continue with the proof of  \eqref{second_moments}. We observe that \eqref{mixed moments conditioned} in particular gives
\begin{equation*}
\left| \E(Z_{i_k}^{2}|Z_{i_{k-1}}\ldots Z_{i_1})-\mathbb E(Z_{i_k}^2)\right| =\left| \E(Z_{i_k}^{2}|Z_{i_{k-1}}\ldots Z_{i_1})-1)\right| \le C \alpha^{i_k-i_{k-1}}.
\end{equation*}
By the same reasoning as above, together with the basic relation $\mathbb E(XY -1)= \mathbb E(X(Y-1))+ \mathbb E(X-1)$ (for any random variables $X$ and $Y$), we have
\begin{align*}
&|\mathbb E (Z_{i_1}^2 Z_{i_2}^2 \ldots Z_{i_{k}}^2)-1|
= |\mathbb E ( \, \mathbb E(Z_{i_1}^2 Z_{i_2}^2 \ldots Z_{i_{k}}^2| Z_{i_{k-1}}\ldots Z_{i_1}) -1) |
\\
& = |\mathbb E ( \, Z_{i_1}^2 Z_{i_2}^2 \ldots Z_{i_{k-1}}^2\mathbb E( Z_{i_{k}}^2|Z_{i_{k-1}}\ldots Z_{i_1}) -1) |
\\
& =  |\mathbb E ( \, Z_{i_1}^2 Z_{i_2}^2 \ldots Z_{i_{k-1}}^2 [\mathbb E( Z_{i_{k}}^2|Z_{i_{k-1}}\ldots Z_{i_1}) -1]) + \mathbb E ( \, Z_{i_1}^2 Z_{i_2}^2 \ldots Z_{i_{k-1}}^2 -1) |
\\
& \leq  C \alpha^{i_k-i_{k-1}} +  |\mathbb E ( \, Z_{i_1}^2 Z_{i_2}^2 \ldots Z_{i_{k-1}}^2 -1)|.
\end{align*}
Iterating this calculation shows \eqref{second_moments}.

We continue with the proof of \eqref{MC_correlations}. We denote by $l_1 \leq \ldots \leq l_{2k}$ the (sorted) joint indices consisting of $i_1,\ldots, i_k$ and $j_1,\ldots, j_k$, i.e.~$(l_1,\ldots, l_{2k})=\text{sort}(i_1,\ldots, i_k,j_1,\ldots, j_k)$, where $\text{sort}:\mathbb R^{2k} \to \mathbb R^{2k} $ denotes the function  that sorts the arguments in increasing order.
We have the general identity for arbitrary $1\leq n \leq 2k-1$
\begin{align}
\nonumber
&|\mathbb E(Z_{l_1} \ldots Z_{l_{2k}}) -  \mathbb E(Z_{l_1} \ldots Z_{l_{n}})E(Z_{l_{n+1}} \ldots Z_{l_{2k}})|
\\ \nonumber
 =& |\mathbb E(\mathbb E(Z_{l_1} \ldots Z_{l_{2k}}| Z_{l_1} \ldots Z_{l_n}) - Z_{l_1} \ldots Z_{l_n}\mathbb E(Z_{l_{n+1}} \ldots Z_{l_{2k}}) )|
\\ \nonumber
 = &|\mathbb E( Z_{l_1} \ldots Z_{l_n} [ \mathbb E(Z_{l_{n+1}} \ldots Z_{l_{2k}}| Z_{l_1} \ldots Z_{l_n}) - \mathbb E(Z_{l_{n+1}} \ldots Z_{l_{2k}})] )|
\\
 \leq & C \alpha^{l_{n+1}-l_n}, \label{factor_ecpectation}
\end{align}
where the last estimate is due to  (\ref{mixed moments conditioned}).
If $i_k \leq j_1$ or $j_k \leq i_1$ this proves the claim. To complete the proof for an arbitrary ordering of the indices, we apply the above estimate successively for certain values of $n$. Without loss of generality, we assume that $l_1=i_1$. Let $n_1,n_2,\ldots,n_m$ denote the indices such that
\begin{align*}
(l_1,\ldots, l_{n_1})&=(i_1,\ldots, i_{n_1})
\\
(l_{n_1+1},\ldots, l_{n_2})&=(j_1,\ldots, i_{n_2})
\\
(l_{n_2+1},\ldots, l_{n_3})&=(i_{n_1+1},\ldots, i_{n_1+(n_3-n_2)})
\\
(l_{n_3+1},\ldots, l_{n_4})&=(j_{n_2+1},\ldots, i_{n_2+(n_4-n_3)}), \ldots
\end{align*}
This means $(l_1,\ldots, l_{n_1}),(l_{n_1+1},\ldots, l_{n_2}), \ldots$ are the longest  subsequences of $(l_1,\ldots, l_{2k})$ such that each sub-sequence consists either of indices in $\{i_1,\ldots,i_k\}$ or in $\{j_1,\ldots,j_k\}$ only.
Hence, on the one hand, by iterating \eqref{factor_ecpectation}, we obtain for some (possibly different) constant $C$
\begin{align}
\nonumber
&|\mathbb E(Z_{l_1} \ldots Z_{l_{2k}}) - \mathbb E(Z_{l_1} \ldots Z_{l_{n_1}})\mathbb E(Z_{l_{n_1}+1} \ldots Z_{l_{n_2}}) \ldots  \mathbb E(Z_{l_{n_{m-1}}+1} \ldots Z_{l_{n_m}})|
\\
& \leq C \alpha^d. \label{corr_est1}
\end{align}
 On the other hand, we can apply the analogous factorization to both terms $\mathbb E (Z_{i_1} \ldots Z_{i_k})$ and   $\mathbb E (Z_{j_1} \ldots Z_{j_k})$ and use e.g.~$i_{n_1+1}-i_{n_1} \geq d, j_{n_2+1}-i_{n_2} \geq d, \ldots$ to obtain
 \begin{align}
& | \mathbb E (Z_{i_1} \ldots Z_{i_k})  \mathbb E (Z_{j_1} \ldots Z_{j_k})
\nonumber  \\
 & \quad - \mathbb E(Z_{l_1} \ldots Z_{l_{n_1}})\mathbb E(Z_{l_{n_1}+1} \ldots Z_{l_{n_2}}) \ldots  \mathbb E(Z_{l_{n_{m-1}}+1} \ldots Z_{l_{n_m}})|\leq C \alpha^d.
 \label{corr_est2}
 \end{align}
 Combining \eqref{corr_est1} and \eqref{corr_est2} completes the proof of \eqref{MC_correlations}.
\end{proof}
In order to see that the assumption of Proposition \ref{non_conv} is non-empty, we consider Markov chains with state space $\{-1,+1\}$  and $\mathbb P (Z_n=i|Z_{n-1}=i)=p$. 
As in this case trivially  $\E (Z_{i_1}^2  Z_{i_2}^2)=1$, it remains to verify \eqref{non_conv_condition1}. Lemma \ref{lemma_expect} shows  for $p> \frac{1}{2}$ that \eqref{non_conv_condition1} is valid with $C=1$ and  $\beta=2p-1$. 
Hence, such a Markov chain  together with a filling that satisfies  $J(\varphi_N) \geq c N^2$, e.g.~a row-wise filling, leads to a limiting spectral measure that differs from the semi-circle (see Fig.~\ref{different_fillings}). 
In addition, such a Markov chain also falls in the realm of Lemma \ref{MClemma1} and hence  together with a filling that satisfies Assumption \ref{schranke_eintr_1}, generates  random matrices, for which the limiting spectral measure is the semicircle. Thus, in this case, the filling $\varphi_N$ is essential for the limiting spectral measure. 
\begin{figure}
\centering
\hfill %
\subfloat[diagonal filling, see example \ref{example_diag} \label{pic:Bild1}]{\includegraphics[width=0.4\textwidth]{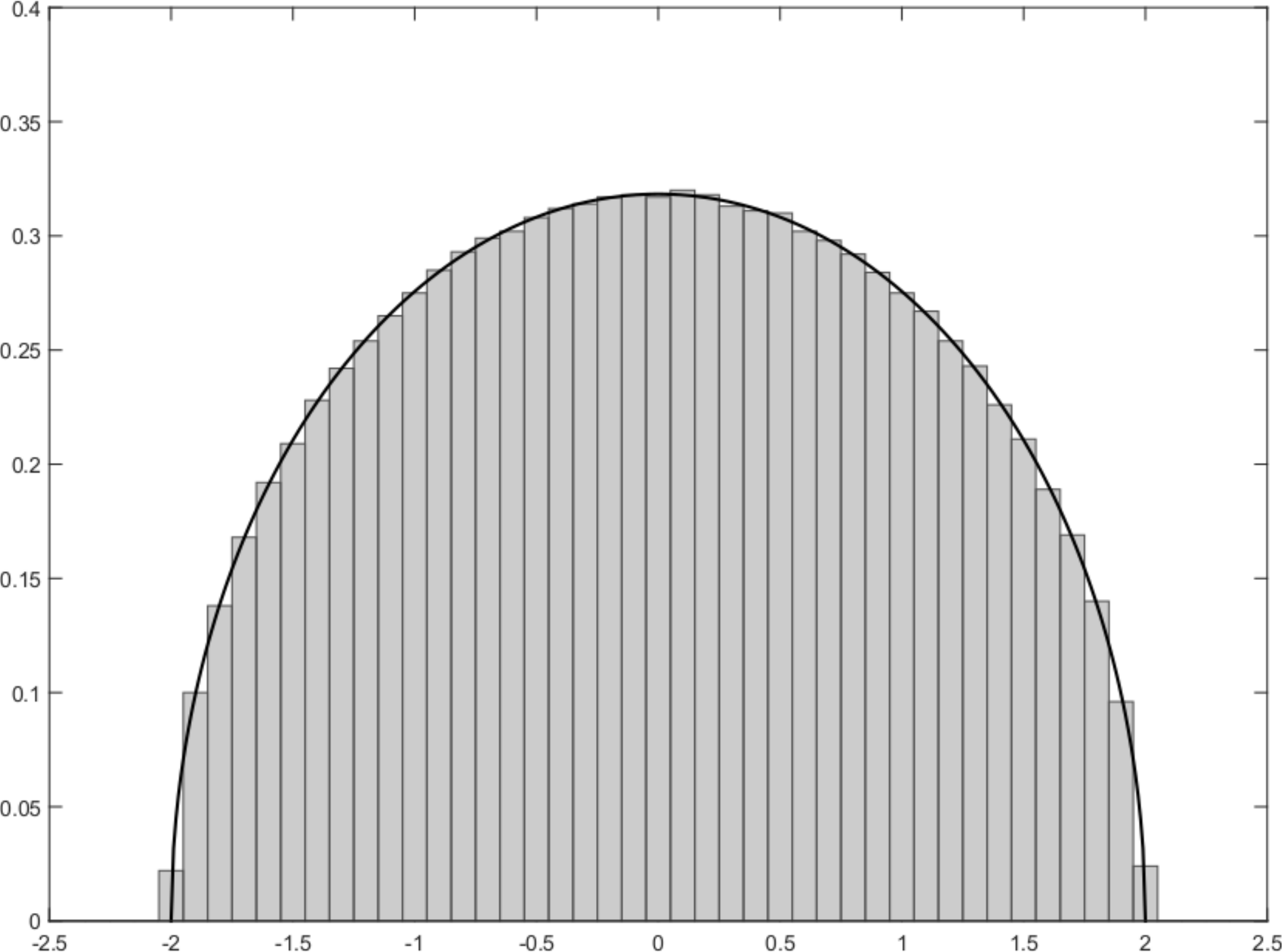}}
\hfill 
\subfloat[row-wise filling, see remark~\ref{rem_filling}\label{pic:Bild2}]{\includegraphics[width=0.4\textwidth]{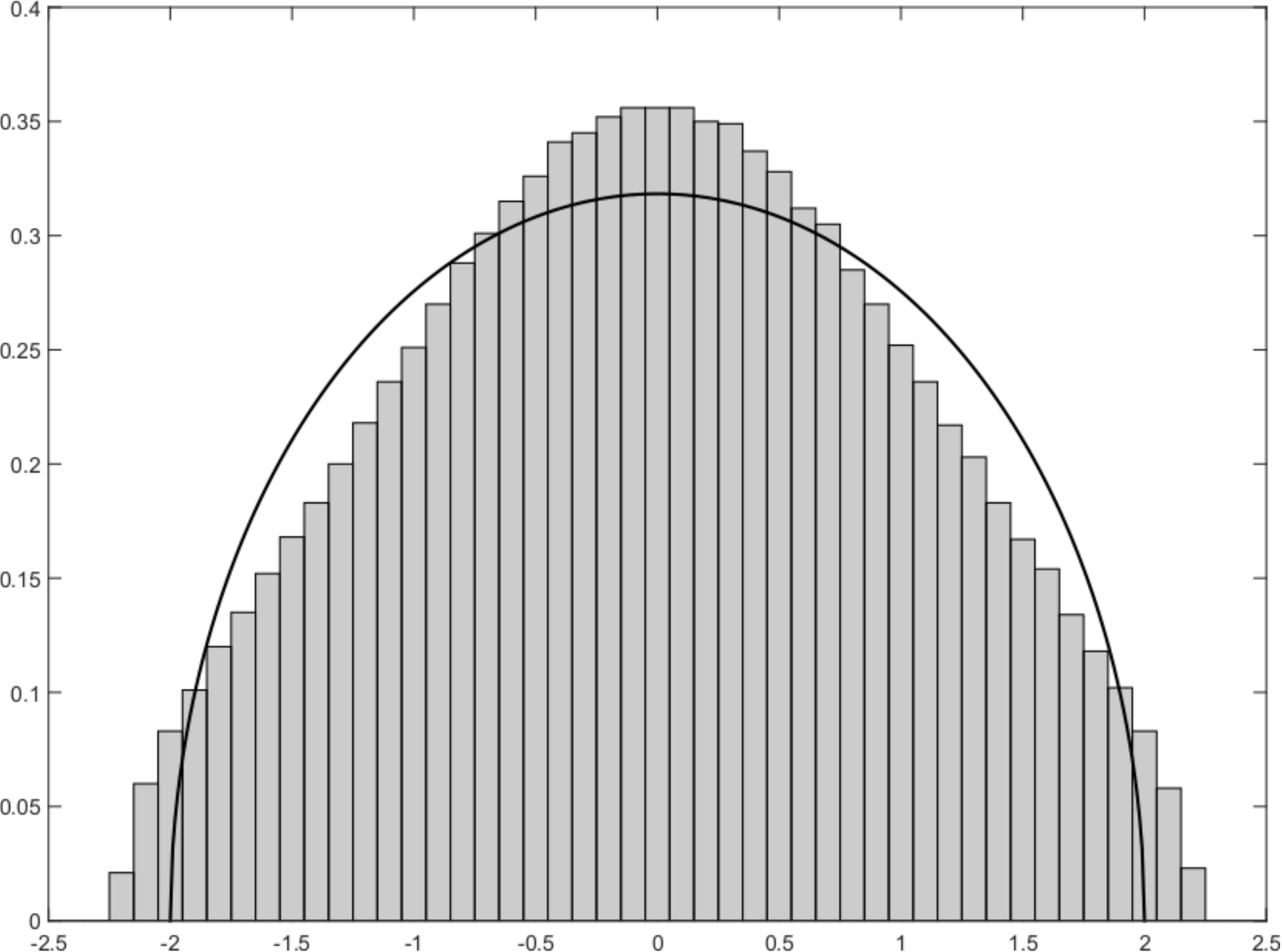}}
\hfill \text{}
\caption{Histograms of eigenvalues of a $10000 \times 10000$ matrix, filled with a Markov chain $(Z_n)_n$ with state space $\{-1,1\}$ and $\mathbb P (Z_n=i|Z_{n-1}=i)=0.7$. The line shows the density of the semi-circle.}
\label{different_fillings}
\end{figure}
\begin{lemma}
\label{lemma_expect}
Let $Z_n$ denote a Markov chain with state space $S=\{-1,+1\}$ and transition matrix $$\begin{pmatrix} p & 1-p \\ 1-p & p \end{pmatrix}$$
for some $p\in(0,1)$. Let $\mathbb P(Z_1=1)=\mathbb P(Z_1=-1)=\frac{1}{2}$. Further set  $\beta \coloneqq 2p-1$. Then we have for $k \in \mathbb N, i_1 \leq i_2 \leq \ldots \leq i_k$ and $n_j \coloneqq i_{j+1}-i_j$
\begin{equation*}
\mathbb E (Z_{i_1} Z_{i_2} \ldots Z_{i_k}) = \begin{cases} \beta^{n_1+n_3+\ldots + n_{k-1}}, & k \text{ even} \\ 0, & \text{else} \end{cases}.
\end{equation*}
\end{lemma}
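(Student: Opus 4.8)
The plan is to exploit the very special structure of this two-state chain through an explicit representation. First I would note that the uniform law on $\{-1,+1\}$ is invariant for the given (doubly stochastic, symmetric) transition matrix, so $(Z_n)_n$ is stationary, and, more usefully, that the chain can be realized as
\[
Z_n = Z_1\prod_{j=2}^{n}\xi_j,\qquad n\ge 1,
\]
where $(\xi_j)_{j\ge 2}$ are i.i.d.\ $\{-1,+1\}$-valued random variables with $\P(\xi_j=1)=p$, independent of $Z_1$. Indeed, with this definition $Z_n=Z_{n-1}\xi_n$ and $\xi_n$ is independent of $(Z_1,\dots,Z_{n-1})$, which reproduces exactly the prescribed transition law (stay with probability $p$, flip with probability $1-p$); since the distribution of a Markov chain is determined by its initial law and transition matrix, this realization has the correct joint law and in particular the correct mixed moments. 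Note that $\E(\xi_j)=p-(1-p)=2p-1=\beta$ and $\E(Z_1)=0$.

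The next step is a telescoping identity: since $Z_b=Z_a\prod_{j=a+1}^{b}\xi_j$ for $a\le b$ and $Z_a^2=1$, we get $Z_aZ_b=\prod_{j=a+1}^{b}\xi_j$, the empty product (hence $1$) when $a=b$. For even $k$, commutativity lets me group consecutive factors, $Z_{i_1}\cdots Z_{i_k}=\prod_{l=1}^{k/2}(Z_{i_{2l-1}}Z_{i_{2l}})=\prod_{j\in R}\xi_j$, where $R=\bigcup_{l=1}^{k/2}(i_{2l-1},i_{2l}]$ is a \emph{disjoint} union because $i_{2l}\le i_{2l+1}$, with $|R|=\sum_{l=1}^{k/2}(i_{2l}-i_{2l-1})=n_1+n_3+\dots+n_{k-1}$. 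Taking expectations and using independence of the $\xi_j$ gives $\E(Z_{i_1}\cdots Z_{i_k})=\beta^{|R|}$, as claimed. For odd $k$ I would instead peel off the first factor and pair the rest, $Z_{i_1}\cdots Z_{i_k}=Z_{i_1}\prod_{l=1}^{(k-1)/2}(Z_{i_{2l}}Z_{i_{2l+1}})$; this equals $Z_1$ times a function of $(\xi_j)_{j\ge 2}$, and since $Z_1$ is independent of that function and centred, the expectation is $0$.

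I do not expect a genuine obstacle here; the computation is routine and the points that need a little care are minor: the one-line verification that the chosen realization really has the prescribed transition matrix, the disjointness of the intervals $(i_{2l-1},i_{2l}]$ which is precisely what makes the exponent come out as $n_1+n_3+\dots+n_{k-1}$ rather than some other combination of the $n_j$, and the harmless treatment of coincident indices $i_j=i_{j+1}$ (the corresponding interval is empty and $Z_{i_j}Z_{i_{j+1}}=1$, consistent with $n_j=0$). As an alternative that avoids the auxiliary sequence, one can argue by downward induction: the Markov property together with the one-step relation $\E(Z_n\mid Z_{n-1})=\beta Z_{n-1}$ gives $\E(Z_{i_k}\mid Z_{i_1},\dots,Z_{i_{k-1}})=\beta^{n_{k-1}}Z_{i_{k-1}}$, whence $\E(Z_{i_1}\cdots Z_{i_k})=\beta^{n_{k-1}}\E(Z_{i_1}\cdots Z_{i_{k-2}}Z_{i_{k-1}}^2)=\beta^{n_{k-1}}\E(Z_{i_1}\cdots Z_{i_{k-2}})$, and iterating reaches either the empty product (even $k$) or $\E(Z_{i_1})=0$ (odd $k$).
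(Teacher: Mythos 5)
Your proposal is correct. Your primary argument --- realizing the chain as $Z_n = Z_1\prod_{j=2}^{n}\xi_j$ with i.i.d.\ $\{\pm1\}$-valued flips $\xi_j$ independent of $Z_1$, so that for even $k$ the product telescopes to $\prod_{j\in R}\xi_j$ over the disjoint union $R$ of intervals $(i_{2l-1},i_{2l}]$ --- is genuinely different from the paper's. The paper uses exactly the iterative conditioning that you offer as an alternative: it computes $\E(Z_{i+j}\mid Z_i)=(2p-1)^j Z_i$, conditions on the last index, uses $Z_{i_{k-1}}^2=1$ to drop two factors per step, and terminates at the empty product (even $k$) or at $\E(Z_{i_1})=0$ (odd $k$). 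Your coupling argument is a little more transparent, producing the entire exponent $n_1+n_3+\dots+n_{k-1}$ in one stroke from independence of the $\xi_j$ and handling the odd case at once via $\E(Z_1)=0$; the paper's induction avoids introducing an auxiliary sequence but requires a few more bookkeeping steps. Both are short and rigorous. The disjointness of the intervals $(i_{2l-1},i_{2l}]$, which you explicitly flag, is indeed the single observation needed to identify the exponent correctly, and the empty-interval case $i_{2l-1}=i_{2l}$ is handled consistently in either route.
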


\begin{proof}
The trick is similar to the previous proof. However, for a binary Markov chain we can compute explicitly that
$$
\E(Z_{i+j}|Z_i)=(2p-1)^j Z_i.
$$
Therefore, using that $s^2=1$ for all $s \in S$ we obtain for $i_1 < i_2 < \ldots <i_k$
\begin{eqnarray*}
\mathbb E (Z_{i_1} Z_{i_2} \cdots Z_{i_{k}})&=& \mathbb E (Z_{i_1} Z_{i_2} \cdots Z_{i_{k-1}} \mathbb E (Z_{i_{k}}|Z_{i_{k-1}}))\\
&=& (2p-1)^{i_k-i_{k-1}} \mathbb E (Z_{i_1} Z_{i_2} \cdots Z_{i_{k-2}}  Z^2_{i_{k-1}}) \\
&=& (2p-1)^{i_k-i_{k-1}} \mathbb E (Z_{i_1} Z_{i_2} \cdots Z_{i_{k-2}}).
\end{eqnarray*}
Repeating this calculation successively, we obtain
$$
\mathbb E (Z_{i_1} Z_{i_2} \ldots Z_{i_k}) = \begin{cases} \beta^{n_1+n_3+\ldots + n_{k-1}}, & k \text{ even} \\ 0, & \text{else} \end{cases}
$$
as asserted.
\end{proof}

\subsection{Gibbs measures}
The second important class of examples of random variables that satisfy Assumption \ref{bedingungen_proc} is provided by some Gibbs measures in one dimension.
We prove
\begin{lemma}\label{Gibbslemma}
Let $Z_n$ denote a stochastic process given by a one-dimensional Gibbs measure for some shift-invariant potential
$$
\Phi=\{\phi_A: A\subset \mathbb{Z}, 0<|A|<\infty\}
$$
on a finite state space (here we are following the Georgii's definition of a Gibbs measure in \cite{Georgii}).
Assume that for all odd numbers $k$ and all $i_1, \ldots, i_k$ we have that
$$
\mathbb E (Z_{i_1} Z_{i_2} \ldots Z_{i_k})=0
$$
and $\mathbb E(Z_i^2)=1$ for all $i \in \mathbb N$.
Moreover, assume that Dobrushin's condition is satisfied, i.e.
\begin{equation}
\sum_{A: 0\in A} (|A|-1) \sup_{\zeta,\eta} |\phi_A(\zeta)-\phi_A(\eta)| < 2.
\label{dobrushin}
\end{equation}
holds.

Finally we require the following condition
\begin{equation}
\sum_{A: 0\in A} e^{t \mathrm{diam}(A)} (|A|-1) \sup_{\zeta,\eta} |\phi_A(\zeta)-\phi_A(\eta)| < \infty,
\label{expcond}
\end{equation}
for some $t>0$.

Then for all even $k$ there are constants $C>0$ and $\beta \in[0,1)$ such that for all $i_1 \le i_2 \le \ldots \le i_k$ with  $n_j \coloneqq i_{j+1}-i_j$
 it holds
\begin{equation*}
\left|\mathbb E (Z_{i_1} Z_{i_2} \ldots Z_{i_k})\right|   \le C \beta^{n_1+n_3+\ldots + n_{k-1}}
\end{equation*}
and
for $j_1 \le \ldots \le j_k$ with $d \coloneqq \min_{n,m \in \{1,\ldots,k\}}|i_n-j_m|$  we have
 \begin{equation*}
\left| \mathbb E (Z_{i_1} \ldots Z_{i_{k}} Z_{j_1} \ldots Z_{j_{k}}) - \mathbb E (Z_{i_1} \ldots Z_{i_{k}}) \mathbb E (Z_{j_1} \ldots Z_{j_{k}})   \right| \leq C  \beta^{d}.
\end{equation*}
Moreover, for   $d'\coloneqq \min_{j=1,3,5\ldots} i_{j+2}-i_j$ it holds
\begin{equation*}
\left| \mathbb E (Z_{i_1}^2 Z_{i_3}^2 \ldots Z_{i_{k-1}}^2)-1 \right| \leq C  \beta^{d'}.
\end{equation*}
\end{lemma}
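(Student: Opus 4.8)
The plan is to reduce the statement for one-dimensional Gibbs measures to a quantitative mixing property, and then feed that property into exactly the same algebraic manipulations used in the proof of Lemma \ref{MClemma1}. The only structural difference from the Markov case is that a Gibbs measure need not be Markov, so the conditional expectations no longer collapse via a transition matrix; instead I would invoke Dobrushin's uniqueness theorem together with condition \eqref{expcond} to obtain exponential decay of correlations. Concretely, I would first establish the following substitute for the Markov estimate: under \eqref{dobrushin} and \eqref{expcond} there exist $C>0$ and $\alpha\in[0,1)$ such that for any bounded functions $f$ depending only on coordinates in a set $\Lambda_1$ and $g$ depending only on coordinates in $\Lambda_2$,
\begin{equation*}
\left| \mathbb E(fg) - \mathbb E(f)\mathbb E(g) \right| \le C \,\|f\|_\infty \|g\|_\infty\, \beta^{\,\mathrm{dist}(\Lambda_1,\Lambda_2)},
\end{equation*}
with $\beta = \alpha$; this is the standard consequence of Dobrushin's contraction coefficient being strictly less than $1$ (the Dobrushin comparison theorem / exponential decay of the Dobrushin matrix), and the extra exponential-moment hypothesis \eqref{expcond} on $\mathrm{diam}(A)$ is precisely what upgrades the summable decay to genuine exponential decay. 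I would cite \cite{Georgii} (and if needed the Dobrushin uniqueness chapter there) for this and not reprove it.

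With that mixing inequality in hand, the proof of \eqref{MC_correlations} for the Gibbs case is essentially a transcription of the corresponding part of Lemma \ref{MClemma1}: write $(l_1,\dots,l_{2k}) = \mathrm{sort}(i_1,\dots,i_k,j_1,\dots,j_k)$, split this sorted sequence into its maximal runs of $i$-indices and $j$-indices, and peel off one run at a time. At each peeling step I apply the mixing inequality with $\Lambda_1$ the indices already separated off on the left and $\Lambda_2$ the remaining block; the gap between consecutive runs is at least $d$, so each step costs a factor $C\beta^{d}$, and since the number of runs is at most $2k$ (a constant) the accumulated error is $C'\beta^{d}$ for a new constant $C'$ depending on $k$. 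The same factorization applied separately inside $\mathbb E(Z_{i_1}\cdots Z_{i_k})$ and $\mathbb E(Z_{j_1}\cdots Z_{j_k})$, using that the within-term gaps across a run boundary are also $\ge d$, gives \eqref{corr_est2}'s analogue, and combining the two yields the claimed bound on $|\mathbb E(\prod Z_i \prod Z_j) - \mathbb E(\prod Z_i)\mathbb E(\prod Z_j)|$. Note that the boundedness of the state space makes all the $\|\cdot\|_\infty$ factors harmless constants.

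For the moment bound $|\mathbb E(Z_{i_1}\cdots Z_{i_k})| \le C\beta^{n_1+n_3+\dots+n_{k-1}}$ I would argue as follows. Using the mixing inequality with $f = Z_{i_1}\cdots Z_{i_{2l-1}}$ (living on coordinates $\le i_{2l-1}$) and $g = Z_{i_{2l}}\cdots Z_{i_k}$ (living on coordinates $\ge i_{2l}$), and then invoking the vanishing-odd-moments hypothesis — so that whichever of $\mathbb E(f)$, $\mathbb E(g)$ multiplies an odd number of factors is zero, exactly as in the display after \eqref{mixed moments conditioned} in the Markov proof — I get $|\mathbb E(Z_{i_1}\cdots Z_{i_k})| \le C\beta^{\,i_{2l}-i_{2l-1}}$ for every $l\le k/2$. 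Taking the maximum over $l$ and then using $\alpha^{\max} \le \prod_l (\alpha^{1/(k/2)})^{n_{2l-1}}$ with a relabelled $\beta$ gives the product bound. The estimate \eqref{est_squares_2}-type statement for $\mathbb E(Z_{i_1}^2 Z_{i_3}^2\cdots Z_{i_{k-1}}^2)$ follows by the same telescoping as in the Markov proof: write $\mathbb E(XY-1) = \mathbb E(X(Y-1)) + \mathbb E(X-1)$ with $Y = Z_{i_{k-1}}^2$ and $X$ the product of the remaining squares, bound $\mathbb E(X(Y-1))$ by the mixing inequality applied to the centered function $Y-1$ (whose distance to the support of $X$ is $\ge d'$) times $\|X\|_\infty$, and iterate; the gaps that appear are exactly the $d'$-type gaps $i_{j+2}-i_j$.

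The main obstacle is purely a matter of correctly quoting and applying the Gibbsian input: one must be careful that Dobrushin's condition \eqref{dobrushin} gives uniqueness and a contraction coefficient $c<1$, but translating this into \emph{exponential} (not merely summable) decay of correlations at rate governed by a single $\beta<1$ requires the auxiliary hypothesis \eqref{expcond} on exponential moments of $\mathrm{diam}(A)$ — this is the step where the decay rate $\beta$ and constant $C$ are actually produced, and the rest is bookkeeping identical to Lemma \ref{MClemma1}. I would therefore spend the real work of the proof on stating the decay-of-correlations lemma precisely (with a reference to \cite{Georgii}) and on verifying that the functions appearing in the peeling argument are of the product form the lemma requires; everything downstream is the same combinatorial peeling already carried out for Markov chains.
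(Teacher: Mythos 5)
Your proposal is correct and follows essentially the same strategy as the paper: reduce the Gibbs case to an exponential mixing estimate guaranteed by Dobrushin's condition \eqref{dobrushin} together with the exponential-moment condition \eqref{expcond}, cite \cite{Georgii} for that estimate, and then rerun the peeling algebra of Lemma \ref{MClemma1}. The paper is slightly more economical in the reduction step: instead of the two-sided covariance bound $\left|\E(fg)-\E(f)\E(g)\right|\le C\|f\|_\infty\|g\|_\infty\,\beta^{\mathrm{dist}(\Lambda_1,\Lambda_2)}$ you propose, it quotes from Georgii (Chapter 8) exactly the one-sided conditional estimate
$\max_{s}\left|\P(Z_{i_k}=s_{j_k},\ldots,Z_{i_l}=s_{j_l}\,|\,Z_{i_{l-1}},\ldots,Z_{i_1})-\P(Z_{i_k}=s_{j_k},\ldots,Z_{i_l}=s_{j_l})\right|\le C\alpha^{i_l-i_{l-1}}$
that drives the Markov-chain proof, so that the remainder of the argument carries over verbatim; your covariance form is equally valid and your subsequent bookkeeping is sound.
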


\begin{proof}
Note that the crucial estimate in the proof of Lemma \ref{MClemma1} was that for any $l \le k$  and all $i_1 \le i_2 \le \ldots \le i_k$  we have
\begin{eqnarray*}
&&\max_{s_{j_1}, \ldots, s_{j_k} \in S } |\P(Z_{i_k}=s_{j_k}, \ldots, Z_{i_l}=s_{j_l}|Z_{i_{l-1}}=s_{j_{l-1}}, \ldots, Z_{i_{1}}=s_{j_{1}} )\\
&&\qquad \qquad -\P(Z_{i_k}=s_{j_k}, \ldots, Z_{i_l}=s_{j_l}|\le C \alpha^{i_l-i_{l-1}}
\end{eqnarray*}
for some constant $C>0$ and some $\alpha \in [0,1)$.
Now due to \cite{Georgii}, Chapter 8, this estimate  holds if conditions \eqref{dobrushin} and \eqref{expcond} are satisfied.
Thus the rest of the proof follows the proof of Lemma \ref{MClemma1}.
\end{proof}

\begin{remark}
If we take, for example, a potential with finite range, then \eqref{expcond} is satisfied. If we consider a Gibbs measure with a parameter $\beta>0$ (usually called the inverse temperature), i.e.~we substitute $\Phi$ by $\Phi_\beta \coloneqq \{\beta\phi_A: A\subset \Z, 0<|A|<\infty\}$, then \eqref{dobrushin} holds whenever $\beta$ is small enough.
\end{remark}

\subsection{Gaussian processes}
Also Gaussian Markov processes satisfy the assumptions of Theorem \ref{main_theo}. More precisely we prove:
\begin{lemma}\label{gaussian}
Let $(Z_n)$ denote a stationary Gaussian Markov process with zero mean and variance one.
For  $k \in \mathbb N$ there are constants $C>0$ and $\beta \in[0,1)$ such that for all  $i_1 \le i_2 \le \ldots \le i_k$ with $n_j \coloneqq i_{j+1}-i_j$ and $j_1 \le \ldots \le j_k$ with $d \coloneqq \min_{n,m \in \{1,\ldots,k\}}|i_n-j_m|$   it holds
\begin{equation}
\label{even_moments_GP}
\left|\mathbb E (Z_{i_1} Z_{i_2} \ldots Z_{i_k})\right|   \le C \beta^{n_1+n_3+\ldots + n_{k-1}}, \quad \text{if } k \text{ even},
\end{equation}
\begin{equation}
\label{odd_moment_GP}
\left|\mathbb E (Z_{i_1} Z_{i_2} \ldots Z_{i_k})\right|  =0,\quad \text{if } k \text{ odd}
\end{equation}
and
 \begin{equation}
 \label{corr_GP}
\left| \mathbb E (Z_{i_1} \ldots Z_{i_{k}} Z_{j_1} \ldots Z_{j_{k}}) - \mathbb E (Z_{i_1} \ldots Z_{i_{k}}) \mathbb E (Z_{j_1} \ldots Z_{j_{k}})   \right| \leq C  \beta^{d}.
\end{equation}
Moreover,
for  $d'\coloneqq \min_{j=1,3,5\ldots} i_{j+2}-i_j$ we have
\begin{equation}
\label{second_moment_Gaussian_process}
\left| \mathbb E (Z_{i_1}^2 Z_{i_3}^2 \ldots Z_{i_{k-1}}^2)-1 \right| \leq C  \beta^{d'}.
\end{equation}

\end{lemma}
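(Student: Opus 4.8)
The plan is to reduce all four estimates to two facts about a stationary, mean-zero, unit-variance Gaussian Markov process $(Z_n)$: the explicit form of its covariance, and Wick's formula for the moments of a Gaussian vector. For the covariance, set $\rho:=\mathbb{E}(Z_1Z_2)$. Since for jointly Gaussian standardised variables one has $\mathbb{E}(Z_{n+1}\mid Z_n)=\rho\,Z_n$, the Markov property together with the tower rule gives $\mathbb{E}(Z_{n+h}\mid Z_n)=\rho^{h}Z_n$ by induction on $h$, hence $\mathbb{E}(Z_iZ_j)=\rho^{|i-j|}$. A genuinely random such process has $|\rho|<1$ (otherwise $Z_2=\pm Z_1$ a.s.), so I put $\beta:=|\rho|\in[0,1)$; if $\rho=0$ the $Z_n$ are i.i.d.\ and all claims are immediate, so one may assume $0<\beta<1$. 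Throughout, the constant $C$ is allowed to depend on $k$.

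By Wick's theorem (see e.g.\ \cite{agz}), for any indices $a_1,\dots,a_\ell$ one has $\mathbb{E}(Z_{a_1}\cdots Z_{a_\ell})=\sum_{P}\prod_{\{p,q\}\in P}\rho^{\,|a_p-a_q|}$, the sum running over all pairings $P$ of $\{1,\dots,\ell\}$, and this vanishes for odd $\ell$; this is exactly \eqref{odd_moment_GP}. For \eqref{even_moments_GP}, fix $i_1\le\cdots\le i_k$ with $k$ even and, for a pairing $P$ and $1\le j\le k-1$, let $c_j(P)$ be the number of pairs $\{p,q\}\in P$ with $p\le j<q$; then $\sum_{\{p,q\}\in P}|i_p-i_q|=\sum_{j=1}^{k-1}c_j(P)\,(i_{j+1}-i_j)$. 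The combinatorial heart of the matter is that for odd $j$ the block $\{1,\dots,j\}$ has odd cardinality, so at least one pair of $P$ must leave it, i.e.\ $c_j(P)\ge1$; consequently $\sum_{\{p,q\}\in P}|i_p-i_q|\ge n_1+n_3+\cdots+n_{k-1}$, and since $0<\beta<1$ each term in Wick's sum is bounded in modulus by $\beta^{\,n_1+n_3+\cdots+n_{k-1}}$. As there are $(k-1)!!$ pairings, \eqref{even_moments_GP} follows with $C=(k-1)!!$.

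The remaining two estimates follow the same template. For \eqref{corr_GP} I would apply Wick's theorem to the $2k$ indices obtained by concatenating $(i_1,\dots,i_k)$ and $(j_1,\dots,j_k)$: the pairings that match $i$-indices only with $i$-indices and $j$-indices only with $j$-indices sum exactly to $\mathbb{E}(Z_{i_1}\cdots Z_{i_k})\,\mathbb{E}(Z_{j_1}\cdots Z_{j_k})$, so the left-hand side of \eqref{corr_GP} equals the sum over the remaining pairings. Each of those contains a pair $\{i_a,j_b\}$, and since $|i_a-j_b|\ge d$ and every other factor has modulus $\le1$, the corresponding product has modulus $\le\beta^{d}$; bounding the number of pairings by $(2k-1)!!$ gives \eqref{corr_GP}. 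For \eqref{second_moment_Gaussian_process}, write $m=k/2$, set $t_r:=i_{2r-1}$ and place two slots at each $t_r$; applying Wick's theorem to these $2m$ slots, the single pairing that joins the two slots at each $t_r$ contributes $\rho^{0}=1$, while any other pairing crosses some gap $(t_r,t_{r+1})$ and hence contributes a product of modulus at most $\beta^{\,t_{r+1}-t_r}\le\beta^{\,d'}$; summing over the at most $(k-1)!!$ pairings yields the claim.

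The only non-routine ingredient is the crossing/parity argument (and its two variants), namely that the exponents generated by Wick's formula always dominate $n_1+n_3+\cdots+n_{k-1}$, respectively $d$ and $d'$; granting this, the rest is bookkeeping with double factorials. A minor point to keep honest is that a single $\beta=|\rho|$ and a single ($k$-dependent) constant $C$ can be used in all four inequalities, which is clear from the argument above.
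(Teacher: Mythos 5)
Your proof is correct and follows essentially the same route as the paper's: expand each mixed moment by Isserlis'/Wick's theorem and exploit the exponential covariance $\mathbb{E}(Z_iZ_j)=\rho^{|i-j|}$ of a stationary Gaussian Markov process, with $\beta=|\rho|<1$. The only differences are cosmetic and in your favor: you derive the covariance directly from the Markov property and the tower rule rather than citing Doob's theorem, and you make explicit the parity/crossing argument (that every Wick pairing yields an exponent of at least $n_1+n_3+\cdots+n_{k-1}$, respectively at least $d$ or $d'$), which the paper compresses into the one-line remark that the non-leading summands are dominated by the first.
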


\begin{proof}
First, observe that assertion (\ref{odd_moment_GP}) is obvious.
Assertions \eqref{even_moments_GP}, \eqref{corr_GP} and \eqref{second_moment_Gaussian_process} follow from Isserli's theorem together with Doob's theorem. Indeed, in view of \eqref{even_moments_GP}, Isserlis' theorem \cite{Isserlis} states that
for all even $k$ and all indices $i_1 \le i_2 \le \ldots \le i_{2k}$
\begin{equation}\label{isser}
	\E\left[Z_{i_1}\cdot\ldots\cdot Z_{i_{2k}}\right] = \prod_{l=1}^{k} t(i_{2l-1} ,i_{2l}) + \sum_{\sigma\in S_{2k}^k} \prod_{l=1}^{k} t(i_{\sigma(2l-1)} ,i_{\sigma(2l)}),
\end{equation}
where we denote by $S_{2k}^k$ the set of all permutations $\sigma$ of $\{1,\ldots,2k\}$ such that $\{\sigma(2l-1), \sigma(2l)\}\neq \{2l'-1,2l'\}$ for at least one $l\in\{1,\ldots,k\}$ and any $l'\in\{1,\ldots,k\}$ and
for any $i,i'\in\N$,
\begin{equation*}
t(i,i') \coloneqq \cov(Z_i,Z_{i'}),
\end{equation*}
are the covariances.

As a consequence of Doob's theorem \cite{doob} we can conclude that for Gaussian Markov process
$t(i,i') = \beta^{|i-i'|}$ for some $-1 < \beta < 1$ and any $i,i'\in\N$. Thus the summands in the second term on the right hand side of \eqref{isser} are all smaller than the first summand. Together with the fact, that the size of $S_{2k}^k$ is finite and only depends on $k$ this shows \eqref{even_moments_GP}.
By similar arguments, we obtain \eqref{corr_GP} and \eqref{second_moment_Gaussian_process}.
\end{proof}

\subsection{A `diagonal' mapping $\varphi_N$}
In the following example and lemma, we show that there are fillings of a random matrix by a stochastic process that satisfy Assumption \ref{schranke_eintr_1}. Fortunately, the situation we always had in mind, namely the matrix we obtain when writing the entries of the stochastic process successively on the diagonals of the matrix as indicated by remark \ref{rem_filling} satisfies Assumption \ref{schranke_eintr_1}.

\begin{example}\label{example_diag}
We consider  the  mapping $\varphi_N$ that leads to the following matrices:
\begin{equation*}
X_N=\begin{pmatrix}
 Z_1 & Z_{N+1} & Z_{2N} &\ldots & \ldots & Z_{\frac{N(N+1)}{2}}
 \\
Z_{N+1} & Z_2 & Z_{N+2} &\ldots & \ldots & \ldots
  \\
Z_{2N} & Z_{N+2} & Z_3 & Z_{N+3} & \ldots & \ldots
   \\
\ldots & \ldots & Z_{N+3} & Z_4 & \ldots & \ldots
    \\
\ldots &\ldots & \ldots & Z_{10} & \ldots & Z_{2N-1}
     \\
Z_{\frac{N(N+1)}{2}} & \ldots & \ldots & \ldots & Z_{2N-1} & Z_N
\end{pmatrix}.
\end{equation*}
To check whether Assumption \ref{schranke_eintr_1} is satisfied, we need to compute the distance defined in Definition \ref{Def_filling}, (b).
The main ingredient to calculate this distance is a representation of the inverse $\varphi_N^{(-1)}$.
In this case, it can be calculated via
\begin{equation*}
\varphi_N^{(-1)}(j,k)=   \frac{N(N+1)}{2} - \frac{(N-|j-k|)(N-|j-k|+1)}{2} + \min(j,k).
\end{equation*}
For $j\leq k$ this formula can easily be verified as follows: The first term $\frac{N(N+1)}{2}$ denotes the number of steps between the $(1,1)$-entry and the $(1,N)$-entry. Observe that $N-(k-j)$ is the length of the diagonal containing the $(j,k)$-entry. Hence, the difference of the first two terms is the number of steps from the $(1,1)$-entry to the upper left corner of the diagonal with the $(j,k)$-entry. Further there are $j$ steps along this diagonal to the $(j,k)$ entry.
\end{example}
We will now verify that this filling is indeed valid in the sense of Assumption~\ref{schranke_eintr_1}.

\begin{lemma}\label {lemma_diag}
For $\varphi_N$ as in Example \ref{example_diag}, we have
for all  $N \in\mathbb N$, $i,j \in\{1,\ldots, N\}$ and $n \in \mathbb N, n>0$
\begin{equation*}
\# \{ x \in \{1,\ldots N\} : \|(i,x)- (x,j) \|_{\varphi_N} =n\} \leq 4.
\end{equation*}
In particular, Assumption \ref{schranke_eintr_1}  is satisfied and for stochastic processes that satisfy Assumption \ref{bedingungen_proc} the limiting spectral density is given by the semi-circle.
\end{lemma}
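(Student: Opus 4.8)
The plan is to work directly with the explicit formula for $\varphi_N^{(-1)}$ recorded in Example \ref{example_diag} and reduce the counting problem to an elementary estimate on integer solutions of a quadratic equation. Fix $i,j$ and set, for $x\in\{1,\dots,N\}$, the abbreviation $a(x)\coloneqq|i-x|$ and $b(x)\coloneqq|x-j|$; these are the "diagonal offsets" of the entries $(i,x)$ and $(x,j)$, respectively (after reordering each index pair so that the first coordinate is the smaller one, as prescribed in Definition \ref{Def_filling}(b)). Plugging into the formula gives
\begin{equation*}
\varphi_N^{(-1)}(i,x)-\varphi_N^{(-1)}(x,j)
= \frac{(N-b(x))(N-b(x)+1)}{2}-\frac{(N-a(x))(N-a(x)+1)}{2} + \bigl(\min(i,x)-\min(x,j)\bigr).
\end{equation*}
First I would observe that on each of the intervals $x\le \min(i,j)$, $\min(i,j)\le x\le\max(i,j)$, and $x\ge\max(i,j)$, both $a(x)$ and $b(x)$ are affine functions of $x$ with slope $\pm1$, and likewise $\min(i,x)-\min(x,j)$ is affine in $x$. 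Hence on each such interval the difference above is a quadratic polynomial $P(x)$ in $x$ with leading coefficient $\pm1$ (the quadratic terms come only from the squares $(N-b(x))^2$ and $(N-a(x))^2$, whose coefficients are $b(x)^2-a(x)^2$ up to lower order; on each interval $a$ and $b$ are affine, so this is a genuine quadratic unless the leading terms cancel, in which case it is affine — and an affine function with nonzero slope takes each value at most once).

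The key step is then: the equation $\|(i,x)-(x,j)\|_{\varphi_N}=n$ means $P(x)=\pm n$ on the relevant interval, i.e.\ $P(x)-n=0$ or $P(x)+n=0$. A quadratic (or affine) polynomial with leading coefficient $\pm1$ has at most $2$ real roots, hence at most $2$ integer solutions; combining the two sign choices $\pm n$ gives at most... too many, so I need to be a little more careful. The cleaner route is to note that $P$ is \emph{strictly monotone} on each of the three intervals except possibly across a single vertex of the parabola; splitting each of the (at most three) intervals at that vertex yields at most a bounded number of subintervals on each of which $x\mapsto \varphi_N^{(-1)}(i,x)-\varphi_N^{(-1)}(x,j)$ is strictly monotone, and on a strictly monotone stretch the value $n$ (or $-n$) is attained at most once. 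Bookkeeping the number of monotone stretches — three intervals, each possibly split once by a parabola vertex, but one checks the vertex lies outside or the two halves glue monotonically — gives the bound $4$. I would carry this out by treating the three cases $x\le\min(i,j)$, $\min(i,j)\le x\le\max(i,j)$, $x\ge\max(i,j)$ explicitly, writing down $a(x),b(x),\min(i,x)-\min(x,j)$ in each, and reading off that the total number of $x$ with a prescribed absolute value of the difference is at most $4$.

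The main obstacle is purely bookkeeping: making sure the piecewise-quadratic analysis is organized so that the constant really comes out as $4$ rather than some larger (but still $O(1)$) constant, and handling the degenerate cases where the quadratic term vanishes (so the branch is affine) or where $i,j$ coincide or are extreme. None of this is deep, and in fact for Assumption \ref{schranke_eintr_1} any $O(1)$ bound — indeed any $o(N)$ bound — suffices, so even a lossy version of the argument closes the lemma. Once the counting bound is established, Assumption \ref{schranke_eintr_1} is immediate since $4=o(N)$, and the final sentence follows directly from Theorem \ref{main_theo} applied to this $\varphi_N$ together with any process satisfying Assumption \ref{bedingungen_proc}.
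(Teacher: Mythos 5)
Your approach is in essence the same as the paper's: use the explicit formula for $\varphi_N^{(-1)}$, split $\{1,\dots,N\}$ into the three intervals determined by the positions of $i$ and $j$, and argue that $F(x)\coloneqq\|(i,x)-(x,j)\|_{\varphi_N}$ is strictly monotone on a bounded number of pieces. However, there is a technical slip in the middle of your argument that would confuse the bookkeeping: you describe the difference $\varphi_N^{(-1)}(i,x)-\varphi_N^{(-1)}(x,j)$ as a quadratic in $x$ with leading coefficient $\pm1$ on each interval, with piecewise behavior governed by parabola vertices. In fact the quadratic terms \emph{always} cancel: on each interval $|i-x|$ and $|j-x|$ are affine with slope $\pm1$, so $|i-x|^2-|j-x|^2=(i-x)^2-(j-x)^2$ is linear in $x$ (the $x^2$ terms cancel identically, regardless of the signs of the slopes). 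Thus the quantity inside the outer absolute value is piecewise \emph{affine}, not piecewise quadratic, and there is no parabola vertex anywhere. The extra split in the middle interval comes instead from the absolute value in Definition \ref{Def_filling}(b): the affine expression changes sign at $x=\tfrac{1}{2}(i+j)$, producing two monotone halves. The paper's computation shows that on $x\le i$ and on $x\ge j$ the affine expression has a fixed sign (so the absolute value is inactive and $F$ is monotone on each), while on $i<x<j$ the sign flips once — yielding exactly $1+1+2=4$ monotone stretches and hence the constant $4$. Your hedge ("unless the leading terms cancel, in which case it is affine") saves the argument from being outright wrong, and as you note any $O(1)$ bound suffices for Assumption \ref{schranke_eintr_1}; but to obtain the stated sharp bound you would need to recognize that the cancellation is automatic and that the kink is an absolute-value phenomenon, not a parabola-vertex one.
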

\begin{proof}
Let $N, i,j$ be fixed. We assume without loss of generality that $i<j$ (observe that $n\neq 0$ implies  $i\neq j$).
We consider
$$F(x)\coloneqq \|(i,x)- (x,j) \|_{\varphi_N}.$$
The main idea is that $\{1,\dots, N\}$ can be divided into four subsets, where $F$ is strictly monotone on each of the subsets. Hence, for given $n$,  $F(x)=n$ for at most four values of $x$. We consider these subsets individually  and recall the representation of $\varphi_N^{(-1)}$ given in example \ref{example_diag}, i.e.
\begin{align*}
F(x)
&=\left|   \frac{(N-|i-x|)(N-|i-x|+1)-(N-|x-j|)(N-|x-j|+1)}{2} \right. \\
& \qquad   +\min(i,x)- \min(x,j) \Big|
\\
&= \left| \frac{  i^2-j^2 + 2x(j-i)- (2N+1) \,  \left[ \, |i-x| - |j-x| \, \right] } {2} \right.\\
& \qquad +\min(i,x)- \min(x,j)\Big|.
\end{align*}

\underline{Case 1: $x \leq i$}
We will show that $F(x)_{x\leq i}$ is strictly increasing.
We have
\begin{equation*}
F(x)
=    (j-i) \, \left| -\frac{1}{2}(i+j) + x+N+\frac{1}{2} \right|
=  x(j-i) + C(i,j,N)
\end{equation*}
for some constant $C(i,j,N)$, which does not depend on $x$.
Hence, $F(x)$ is obviously strictly increasing for $x \leq i$.

\underline{Case 2: $x \geq j$}
We have
\begin{equation*}
F(x)
=(j-i)\left|        x- \frac{1}{2}(j+i) - N - \frac{3}{2}\right|
 = -x (j-i) + C(i,j,N).
\end{equation*}
Here, again  $C(i,j,N)$ denotes a constant, which does not depend on $x$ (though it may differ from case 1). Hence, $F(x)$ is strictly decreasing in  this case.

\underline{Case 3: $i < x < j$}
We have by an easy calculation
\begin{align}
&F(x) = \left| \left((j-i) - (2N+1)\right) \left( x-\frac{1}{2} (i+j) \right) + i-x \right| \nonumber
\end{align}
On the one hand, as $ (j-i) - (2N+1) \leq 0$ and $i-x \leq 0$, the term within the absolute value is negative for
$x \geq \frac{1}{2} (i+j)$.
On the other hand, the term is positive for $x<  \frac{1}{2} (i+j)$ (this follows from $(2N+1)-(j-i) \geq x-i$).
Hence we have for some constant $C(i,j,N)$
\begin{equation*}
F(x) = \begin{cases} \phantom{-}((j-i)-(2N+2))x - C(i,j,N),& x< \frac{1}{2} (i+j)\\ -((j-i)-(2N+2))x + C(i,j,N), & x \geq \frac{1}{2} (i+j)
\end{cases},
\end{equation*}
which shows that $F(x)$ is decreasing for $i<x<\frac{1}{2}(i+j)$ and increasing for $\frac{1}{2}(i+j) \leq x < j$.
\end{proof}

\section{Proof of the main result}
Before we complete the proof of the main result, we introduce some notation.
For $k \in \mathbb N$ we set
 \begin{equation*}
 S_k \coloneqq \{P=(P_1,\ldots, P_k) : P_i=(p_i,p_{i+1})\in \{1,\ldots, N\}^2 \}.
 \end{equation*}
 Then we can write (with $k+1=1$)
\begin{align}
\label{Darst_Ewert}
\mathbb E \left( \frac{1}{N} \text{tr} A_N^k \right) &= \frac{1}{N^{k/2 +1}} \sum_{P \in S_k} \mathbb E \left[ X_N(P_1) X_N(P_2) \ldots X_N(P_k) \right]
\\
&= \frac{1}{N^{k/2 +1}} \sum_{P \in S_k} \mathbb E \left[ X_N(P) \right], \nonumber
\end{align}
where we further abbreviated $X_N(P)\coloneqq  X_N(P_1) X_N(P_2) \ldots X_N(P_k)$.
In order to prove Theorem  \ref{main_theo} by the method of moments, it suffices to
prove the following lemma (see \cite[Lemma 2.1.6 and Lemma 2.1.7]{agz}).

\begin{lemma} \label{momentlemma}
For
$X_N \in \mathbb R^{N \times N}$  generated by a stochastic process  $Z_n$ and a mapping $\varphi_N$ as in  Theorem \ref{main_theo}  and  $A_N(i,j)\coloneqq \frac{1}{\sqrt{N}} X_N (i,j)$ we have
\begin{itemize}
\item[(i)]
\begin{equation*}
\lim_{N \to \infty}\mathbb E \left( \frac{1}{N} \tr A_N^k \right) =\begin{cases} 0& k \text{ odd} \\\kappa_{k/2}, & k \text{ even} \end{cases}.
\end{equation*}
\item[(ii)]
\begin{equation*}
\lim_{N \to \infty} \frac{1}{N^{k +2}} \sum_{P \in S_k, Q \in S_k} \mathbb E[X_N(P) X_N(Q)]- \mathbb E[X_N(P)] \mathbb E [X_N(Q)]=0
\end{equation*}
\end{itemize}
\end{lemma}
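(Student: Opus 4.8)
The plan is to prove Lemma~\ref{momentlemma} by the combinatorics of closed walks, following the classical proof of Wigner's theorem but replacing the use of independence by the decay estimates of Assumption~\ref{bedingungen_proc} and the sparsity Assumption~\ref{schranke_eintr_1}. Throughout I would read a $P\in S_k$ in \eqref{Darst_Ewert} as a closed walk $p_1\to p_2\to\cdots\to p_k\to p_1$ on $\{1,\dots,N\}$, group the walks by the isomorphism type of their edge-multigraph, and record the number $v=v(P)$ of distinct vertices together with the multiplicity with which each (undirected) edge is traversed; the process index sitting on an edge $\{a,b\}$ is $\varphi_N^{-1}(a,b)$, so the $\varphi_N$-distance of two edges is exactly the distance of the corresponding process indices.

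For part~(i) the odd case is immediate: each summand $\mathbb{E}[X_N(P)]$ is an expectation of a product of $k$ entries of $(Z_n)_n$, and $k$ odd forces it to vanish by \eqref{moment_cond1}, so $\mathbb{E}\big(\frac1N\tr A_N^k\big)=0$ for every $N$. For even $k=2s$ I would split the walks into three classes. First, the \emph{compatible} walks — those whose multigraph is a tree on exactly $s+1$ vertices with every edge traversed exactly twice: by the usual bijection with Dyck paths there are $\kappa_s\,N(N-1)\cdots(N-s)=\kappa_s N^{s+1}(1+o(1))$ of them, and for each one $\mathbb{E}[X_N(P)]=\mathbb{E}\big[\prod_e Z_{\varphi_N^{-1}(e)}^2\big]$, which by \eqref{est_squares_2} equals $1$ up to an error $\le C\beta^{d'}$; since $d'$ is large on all but $o(N^{s+1})$ of these walks, this class contributes $\kappa_s+o(1)$. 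Second, walks with $v\le s$: there are $O(N^s)$ of them and $|\mathbb{E}[X_N(P)]|\le C$ by \eqref{Beh_EWert}, so their contribution is $O(1/N)$. Third, the remaining walks, with $v\ge s+1$ but not compatible: since a closed walk on a tree traverses every edge an even number of times, such a walk has a multigraph with a cycle, and a short Euler-relation count then shows that the number $t$ of pairs of odd-multiplicity edges satisfies $t\ge v-s$, i.e.\ it strictly exceeds the ``vertex excess'' $v-s-1$; moreover \eqref{Beh_EWert}, applied to the sorted list of the $2s$ edge-indices, gives $|\mathbb{E}[X_N(P)]|\le C\beta^{G(P)}$ with $G(P)$ a sum of gaps that is positive precisely because each of the $t$ pairs of odd edges forces two distinct edges to be $\varphi_N$-close.

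The core of the proof is to sum $\beta^{G(P)}$ over this third class and bound it by $o(N^{s+1})$. I would order the vertex summations so that each of the $t$ ``closeness constraints'' is discharged in one of two ways: if one of the two $\varphi_N$-close edges already has both endpoints fixed then, since $\varphi_N$ is a bijection, the other edge lies in $O(1)$ matrix positions and its remaining endpoint is pinned to $O(1)$ values; and if the two close edges share the currently free vertex $x$ whose two walk-neighbours $i,j$ are already fixed, then Assumption~\ref{schranke_eintr_1} — which controls exactly $\#\{x:\|(i,x)-(x,j)\|_{\varphi_N}=n\}$ — confines $x$ to $o(N)$ values. The claim is that the order can be arranged so that all but at most one constraint per cycle of the multigraph is of the first, ``bijective'' kind; since the number of cycles is at most $t-(v-s-1)$, this leaves at least $v-s-1$ reductions by a full factor $N$ and at least one reduction of a factor $N$ to $o(N)$, so that the $N^{v}$ unconstrained walks become $N^{v}\cdot N^{-(v-s-1)}\cdot o(1)=N^{s+1}o(1)=o(N^{s+1})$. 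Making this bookkeeping airtight for all walk types is the step I expect to be the main obstacle; it is also exactly where the hypothesis on $\varphi_N$ is essential, since for a row-wise filling the corresponding constraints are met $\Theta(N)$ times, the third class contributes at order $N^{s+1}$, and the limiting moments change — which is the content of Proposition~\ref{non_conv}.

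For part~(ii) I would run the same scheme on the doubled configuration $(P,Q)$, $P,Q\in S_k$, using that by \eqref{est_corr} and its refinement \eqref{est_corr_combined} the summand $\mathbb{E}[X_N(P)X_N(Q)]-\mathbb{E}[X_N(P)]\mathbb{E}[X_N(Q)]$ is at most $C\beta^{(d+L)/2}$, where $d$ is the $\varphi_N$-distance between the two edge-index sets and $L$ the internal spread of the combined $2k$ indices. The only configurations that could otherwise reach the normalising order $N^{k+2}$ are the pairs of two non-interacting compatible walks; for these $d$ is large, so subtracting $\mathbb{E}[X_N(P)]\mathbb{E}[X_N(Q)]$ cancels exactly that leading term. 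Every other pair either has a connected combined multigraph, hence at most $k+1$ distinct vertices and a count $O(N^{k+1})$ against $N^{k+2}$, or is controlled by summing $\beta^{d}\,\#\{\cdots\}$ over the separation $d$ with the same bijectivity-plus-Assumption~\ref{schranke_eintr_1} counting as in part~(i). Hence the expression in (ii) tends to $0$, and together with part~(i) this proves Lemma~\ref{momentlemma}, which by \cite[Lemma 2.1.6 and Lemma 2.1.7]{agz} yields Theorem~\ref{main_theo}.
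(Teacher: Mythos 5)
Your overall plan matches the paper's: method of moments, closed--walk counting, Assumption~\ref{bedingungen_proc} for the exponential decay, Assumption~\ref{schranke_eintr_1} to pin down the one vertex that cannot be pinned by counting alone. The odd case, the compatible-tree class (via \eqref{est_squares_2}), and the small-vertex class are handled essentially as the paper does; your remark that the free vertex $x$ with fixed walk-neighbours $i,j$ and a $\varphi_N$-closeness constraint is precisely the configuration that Assumption~\ref{schranke_eintr_1} confines to $o(N)$ values is exactly the paper's use of that hypothesis. Your part~(ii) is also in the same spirit as the paper's, which likewise uses bijectivity of $\varphi_N$ to pin the ``link'' $Q_{j'}$ to $O(1)$ values once $P$ is chosen, together with \eqref{est_corr_combined}.

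The gap is the step you flag yourself: bounding the contribution of walks with an odd-multiplicity step. Your cycle-based bookkeeping (``all but at most one constraint per cycle can be discharged by bijectivity'', ``$t\ge v-s$'') is stated as a claim, not proved, and it is not clear how to order the vertex summations uniformly over all walk types so that these inequalities actually fall out; this is precisely the part that the paper replaces with a sharper combinatorial device. Concretely, for each $P$ with an odd step the paper forms a \emph{pair partition} $\pi$ of $\{1,\dots,k\}$ recording which walk steps become adjacent in the sorted list $\text{sort}(\varphi_N^{-1}(P_1),\dots,\varphi_N^{-1}(P_k))$, so that \eqref{Beh_EWert} gives $|\E[X_N(P)]|\le C\beta^{n_1+n_3+\cdots}$ with the $n_j$ exactly the adjacent sorted gaps; after summing the geometric series in the $n_j$'s, it suffices to show $m(n_1,\dots,n_{k-1},\pi)=o(N^{k/2+1})$ for each fixed $\pi$ and gap vector. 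The paper then splits $\pi$ into \emph{crossing} and \emph{non-crossing} (relative to the walk order). For crossing $\pi$, a chord whose partner already has both endpoints fixed leaves only $O(1)$ further positions by bijectivity of $\varphi_N$, giving $m\le CN^{k/2}$ with no use of Assumption~\ref{schranke_eintr_1}. For non-crossing $\pi$, one first contracts all blocks $\{i,i+1\}$ of gap $0$ (each such block forces $p_i=p_{i+2}$ and costs only a factor $N$) and is then left with a block $\{i,i+1\}$ of strictly positive gap; this is the shared-vertex configuration controlled by Assumption~\ref{schranke_eintr_1}, yielding the needed $o(N)$. This crossing/non-crossing dichotomy, not a cycle count, is what makes the bookkeeping close; your proposal is missing it.
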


\begin{proof}
We start with the proof of (i) and consider the representation given in \eqref{Darst_Ewert}.
By (\ref{moment_cond1}), we can assume that $k$ is even, as the   term  in \eqref{Darst_Ewert} vanishes for $k$ odd.
For $P \in S_k$ we further introduce the notation
$$ [i]_P \coloneqq  \{ j\neq i :  P_i=P_j \text{ or } P_i=(p_{j+1},p_j) \}$$
and we write
\begin{itemize}
\item $ P \in S_k^0$, if $\#[i]_P=2$ for all $i=1,\ldots k$
\item $ P \in S_k^1$, if $\#[i]_P$ even for all $i=1,\ldots k$ and there exists $i_0$ such that $ \#[i_0]_P\geq 4$
\item  $ P \in S_k^*$, if there exists $i_0$ such that $\#[i_0]_P$ is odd.
\end{itemize}
We observe that by a simple combinatorial argument, we have
\begin{equation}
\label{card_S1}
\# \{P: P \in S_k^1\}=o(N^{k/2+1}).
\end{equation}
Indeed, assume that an equivalence relation on the indices $1, \ldots, k$ with equivalence classes $[i]$ is given such that each equivalence class has an even number of elements and   one equivalence class has at least four elements (the amount of such equivalence relations depends on $k$ only).
Now, we estimate the number of $P$, such that $[i]=[i]_P$ for all $i$.
There are at most $N^2$ choices for $(p_1,p_2)$. When determining the values $p_3,p_4,\ldots$ successively, there are at most $N$ choices for each $p_{j+1}$ where $j \notin [i]$ for $i<j$  (otherwise there are at most 2 possibilities). Observe, that there are at most $\frac{k}{2}-1$ equivalence classes in order to ensure $P \in S_k^1$.
Hence, we can neglect those terms with $P \in S_k^1$ in \eqref{Darst_Ewert} and we have
\begin{equation}
\label{Darst_Ewert2}
\mathbb E \left( \frac{1}{N} \text{tr} A_N^k \right) = \frac{1}{N^{k/2 +1}} \sum_{P \in S_k^0 \cup S_k^*} \mathbb E \left[ X_N(P_1) X_N(P_2) \ldots X_N(P_k) \right] + o(1).
\end{equation}
Next, we argue that the terms with $P \in S_k^0$ give a contribution of $\kappa_{k/2}$ in (\ref{Darst_Ewert2}), i.e.~we show
\begin{align}
\label{S_0-terms}
 \frac{1}{N^{k/2 +1}} \sum_{P \in S_k^0  } \mathbb E \left[ X_N(P_1) X_N(P_2) \ldots X_N(P_k) \right] =\kappa_{k/2} + o(1).
\end{align}
For $k=2$ the proof of (\ref{S_0-terms}) is immediate from
\begin{align*}
  \frac{1}{N^{2}} \sum_{P \in S_2^0 } \mathbb E \left[ X_N(P_1)^2\right]=1=\kappa_{1}.
\end{align*}
The main ingredient for the proof of (\ref{S_0-terms}) for $k\geq 4$ is \eqref{est_squares_2}, i.e.~for $i_1 <i_3< \ldots < i_{k-1}$ with $d\coloneqq \min_{j=1,3,5\ldots} i_{j+2}-i_j$ we have
\begin{equation}
\label{est_squares_2.1}
\mathbb E (Z_{i_1}^2 Z_{i_3}^2 \ldots Z_{i_{k-1}}^2)= 1+ \mathcal O( \beta^{d}).
\end{equation}
Here, the constant implicit in the $\mathcal O$-term does not depend on $i_1,i_3,\ldots i_{k-1}$.
In order to use (\ref{est_squares_2.1}), we need to introduce a sorting procedure that allows us to write $X_N(P_1)\ldots X_N(P_k)$ in terms of the stochastic process $(Z_n)_n$ in increasing order of the indices.
We set
 \begin{align*}
 &G_N: (\{1,\ldots, N\}^2)^k \to \left \{1,\ldots, \frac{N(N+1)}{2}\right \}^k;
 \\
 & G_N(P) \coloneqq \text{sort}(\varphi_N^{(-1)}(P_1), \varphi_N^{(-1)}(P_2), \ldots, \varphi_N^{(-1)}(P_k))
 \end{align*}
Here, $\text{sort}(\cdot): \mathbb R^k \to \mathbb R^k$ denotes the function that permutes the arguments such  that they are in increasing order.
Hence, for $P\in S_k$ with $G_N(P)=(i_1,\ldots,i_k),$ we have
\begin{equation}
\label{sort}
  X_N(P_1) X_N(P_2) \ldots X_N(P_k) =Z_{i_1} Z_{i_2} \ldots Z_{i_k}, \quad i_1\leq \ldots \leq i_k.
\end{equation}
In view of (\ref{est_squares_2.1}), we set for $P \in S_k^0$ with $G_N(P)=(i_1,i_1,\ldots,i_{k-1}, i_{k-1})$
 \begin{equation*}
 d(P) \coloneqq \min_{j=1,3,5\ldots} \, (i_{j+2}-i_j).
 \end{equation*}
 With this notation and (\ref{est_squares_2.1}) we obtain
 	\begin{equation*}
 	\frac{1}{N^{k/2 +1}} \sum_{P \in S_k^0  } \mathbb E \left[ 	
 	X_N(P)  \right] 
 	=  \frac{\# S_k^0 }	
 	{N^{k/2 +1}}+   \frac{1}{N^{k/2 +1}}\mathcal O\left( \sum_{P \in 	
 	S_k^0  }  \beta^{d(P)} \right),
	\end{equation*}
where the constant implicit in the $\mathcal O$-term is independent of $N$.
By the same combinatorial arguments like in the classical proof of Wigner's semicircle law by the moment method (see \cite{Arnold} or \cite{agz})
we have
\begin{equation*}
\lim_{N\to \infty}  \frac{\# S_k^0 }{N^{k/2 +1}} = \kappa_{k/2}.
\end{equation*}
Hence, to show (\ref{S_0-terms}), it remains to prove
 \begin{equation}
 \label{sum_d_P}
 \frac{1}{N^{k/2 +1}} \sum_{P \in S_k^0  }  \beta^{d(P)}=o(1).
 \end{equation}
We distinguish between $P \in  S_k^0$ with $d(P) \geq \sqrt{N}$ and with $d(P)< \sqrt{N}$. For $d(P) \geq \sqrt{N}$, we use $\beta^{d(P)} \leq \beta^{\sqrt{N}}$ and
by $\# S_k^0 = \mathcal O(N^{k/2 +1})$, we have
\begin{equation*}
 \frac{1}{N^{k/2 +1}} \sum_{P \in S_k^0, \, d(P)\geq \sqrt{N}} \beta^{d(P)}=o(1).
\end{equation*}
For the remaining terms with $d(P)\leq \sqrt{N}$ we estimate $\beta^{d(P)}\leq 1$ and hence for (\ref{sum_d_P}) it is sufficient to prove $\# \{P \in S_k^0 :  d(P) <\sqrt{N} \}=o(N^{k/2 +1}).$
We can proceed very similarly as in the proof of (\ref{card_S1}). Neglecting the additional information $d(P) <\sqrt{N}$ for a moment, we consider $k/2$ equivalence classes $[i]$ and determining $p_1,p_2,\ldots$ successively, there are $N$ choices for each `new' equivalence class and $N$ additionally  choices for the `starting point', say $p_1$. This would lead to an upper bound of $N^{k/2 +1}$, which is not sufficient. However, the restriction $d(P) <\sqrt{N}$  means that there are two equivalence classes $[i]$ and $[j]$, $i<j$ (again the number if such possible pairs of equivalence classes depends on $k$ only) such that
$\| P_i- P_j \|_{\varphi_N} \leq \sqrt{N}  $.
Hence, in our procedure, when we already determined $P_i$ and first encounter the equivalence class $[j]$, we actually have only $2\sqrt{N}$ choices rather than $N$. This reduces the bound for the total number of possibilities to $N^{(k+1)/2}$, which shows  
 $$\# \{P \in S_k^0 :  d(P) <\sqrt{N} \}=o(N^{k/2 +1})$$ 
and hence completes the proof of (\ref{sum_d_P}) resp.~of (\ref{S_0-terms}).
So far, we have
\begin{equation}
\label{reduction_to_S_k*}
\mathbb E \left( \frac{1}{N} \text{tr} A_N^k \right) =\kappa_{k/2} + \ \frac{1}{N^{k/2 +1}} \sum_{P \in   \cup S_k^*} \mathbb E \left[ X_N(P_1) X_N(P_2) \ldots X_N(P_k) \right] + o(1)
\end{equation}
and it remains to show that the second term on the r.h.s.~vanishes as $N \to \infty.$

When sorting $X_N(P_1)X_N(P_2)\ldots$ as in (\ref{sort}), we recall that by (\ref{Beh_EWert})  the differences $i_2-i_1, i_4-i_3, \ldots $ are of particular interest.
We model this structure with the help of pair partitions.
Note that, in the following paragraphs we will use partitions in a slightly different way then in the above paragraph.
Recall that in the derivation of \eqref{reduction_to_S_k*}, $i \sim j$ corresponded to  $(p_i,p_{i+1})=(p_j,p_{j+1})$ or $(p_i,p_{i+1})=(p_{j+1},p_j)$. From now on, we use $i\sim j$ to model that after sorting according to  (\ref{sort}), $X_N(P_i)$ and $X_N(P_j)$ are neighbors.
We introduce
\begin{equation*}
\mathcal{PP}(k)\coloneqq \{  \pi: \pi \text{ is a pair partition of  }\{1,\ldots, k\}\}.
\end{equation*}
For $\pi \in \mathcal{PP}(k)$, we say that $P \in S_k$ with $(i_1,\ldots,i_k)=G_N(P)$ (see (\ref{sort}))  is $\pi$-consistent , if
\begin{equation*}
 i\sim_\pi j \Leftrightarrow \{ \varphi_N^{(-1)}(P_i),\varphi_N^{(-1)}(P_j)\} = \{i_l,i_{l+1}\} \quad \text{for some odd }l  .
\end{equation*}
The $\pi$-consistency of  $P$, we write $P\in S_k(\pi)$, means that
 $\pi$ prescribes the pairs $P_i,P_j$ that correspond to an odd and the preceding even position of the vector obtained after applying $G_N$ to $P$.

We further introduce the notation for $n_1,\ldots, n_k \in \mathbb N$ and $\pi \in \mathcal{PP} (k)$
\begin{multline*}
 m(n_1,n_3,\ldots, n_{k-1}, \pi) \\
\coloneqq \# \{P \in S_k(\pi) :    (G_N(P))_{j+1}-(G_N(P))_{j}=n_j, j=1,3,5\ldots \}. 
\end{multline*}
Then $m$ is the number of possible vectors $P$, consistent with $\pi$, such that, after sorting $ X_N(P_1) X_N(P_2) \ldots X_N(P_k) $ in the order prescribed by the underlying stochastic process,   the first and the second terms are separated by $n_1$ steps of the stochastic process, the third and the fourth term are separated by $n_3$ steps in the stochastic process and so on.
We obtain by (\ref{Beh_EWert}) that there are constants $C>0$ and $\beta \in [0,1)$ such that
\begin{align*}
  &  \left|\frac{1}{N^{k/2 +1}} \sum_{P \in S_k^*;  } \mathbb E \left[ X_N(P_1) X_N(P_2) \ldots X_N(P_k) \right] \right|
  \\
 \le & C\frac{1}{N^{k/2 +1}} \sum_{\pi\in \mathcal{PP}(k)} \sum_{\substack{n_1, n_3, \ldots \geq 0, \\ \exists i: n_i \neq 0}} m(n_1,n_3\ldots, n_{k-1},\pi) \beta^{n_1+n_3+\ldots + n_{k-1}}.
 \end{align*}
 Recalling that $0 \leq \beta <1$, we have $ \sum_{n_1, n_3, \ldots \geq 0 }  \beta^{n_1+n_3+\ldots + n_{k-1}} \leq K$ for some constant $K$. Hence it suffices to show that for any $n_1, n_3, \ldots \geq 0$ with some $n_i>0$ and any pair partition $\pi$, we have
 \begin{equation}
\label{schranke_m}
 m(n_1,\ldots, n_{k-1},\pi)=o(N^{k/2 +1}).
\end{equation}
To prove (\ref{schranke_m}), let  $n_1, n_3, \ldots \geq 0$  be fixed and without loss of generality we assume that $n_1>0$.
We distinguish between crossing partitions $\pi$ and non-crossing partitions.
Here, a partition is said to be crossing if there are indices $i,i',j,j'$ with
\begin{equation}
\label{def_corssing}
 i \sim_\pi j, \quad  i' \sim_\pi j' \quad \text{and} \quad i<i'<j<j'.
\end{equation}
First, we show that for all $n_1,\ldots, n_{k-1}$ and all crossing pair partitions $\pi$ we have
\begin{equation}
\label{count_cross}
 m(n_1,\ldots, n_{k-1},\pi) \leq C N^{k/2}
\end{equation}
by estimating the number of possible choices for $P$ that contribute to the number $ m(n_1,\ldots, n_{k-1},\pi)$.
We note that the number of possibilities to sort the $\frac{k}{2}$ partition blocks depends on $k$ only, and we may hence assume that each partition block is associated to one of the $n_1,n_3,\ldots$.
Since $\pi$ is a crossing partitions there are indices $i,i',j,j'$ with (\ref{def_corssing}).
Without loss of generality we can assume that $i=1$.
An example for $k=14, i=1,j=9, i'=2, j'=14$ is shown in Fig.~\ref{fig_1}.
\begin{figure}
\begin{center}
 \begin{tikzpicture}[scale=.5]
\filldraw [gray]
(0,0) circle (3pt) node[anchor=north] {$p_1$}
(0,2) circle (3pt)node[anchor=east] {$p_2$}
(0,4) circle (3pt)node[anchor=east] {$p_3$}
(2,6) circle (3pt)node[anchor=south] {$p_4$}
(4,6) circle (3pt)node[anchor=south] {$p_5$}
(6,6) circle (3pt)node[anchor=south] {$p_6$}
(8,6) circle (3pt)node[anchor=west] {$p_7$}
(10,4) circle (3pt)node[anchor=west] {$p_8$}
(10,2) circle (3pt)node[anchor=west] {$p_9$}
(10,0) circle (3pt)node[anchor=north] {$p_{10}$}
(8,-2) circle (3pt)node[anchor=north] {$p_{11}$}
(6,-2) circle (3pt)node[anchor=north] {$p_{12}$}
(4,-2) circle (3pt)node[anchor=north] {$p_{13}$}
(2,-2) circle (3pt)node[anchor=north] {$p_{14}$}
;
 \draw (0,0) -- (0,2) -- (0,4) -- (2,6) -- (4,6) -- (6,6) -- (8,6) -- (10,4) -- (10,2) -- (10,0) -- (8,-2) -- (6,-2) -- (4,-2) -- (2,-2) -- (0,0);
 \draw[dotted] (0,1) -- (10,1) node[midway, above]{$n_3$};
  \draw[dotted] (0,3) .. controls (2,2) .. (1,-1) node[midway, above]{$n_5$};
   \draw[dotted] (3,-2) -- (5,6) node[midway, above]{$n_1$};
   \draw[dotted] (3,6) .. controls (2,4) .. (1,5) node[midway, above]{$n_7$};
      \draw[dotted] (7,6) .. controls (7,3) .. (9,-1) node[midway, above]{$n_9$};
            \draw[dotted] (9,5) .. controls (9,1) .. (7,-2) node[midway, above]{$n_{11}$};
                        \draw[dotted] (10,3) .. controls (6,1.6) .. (5,-2) node[midway, above]{$n_{13}$};
\end{tikzpicture}
\end{center}
\caption{Example of a crossing partition with $k=14$; dotted lines indicate the pair partition.}
\label{fig_1}
\end{figure}
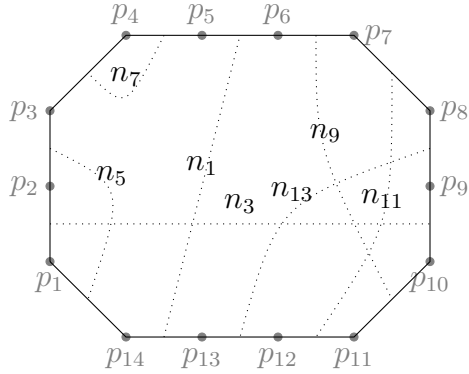

The main idea is to choose $p_1,p_2,\ldots$ successively (in an appropriate order), where once we fixed $P_i=(p_i,p_{i+1})$ there are only $4$ possible choices for $P_j$ if $i\sim j$ (as we already decided which value of $n_1,n_3,\ldots$ is associated to this pair, say $n_1$, we either have to choose $P_j$ such that it is $n_1$ steps before or $n_1$ steps after $P_i$ in the stochastic process, leading to 2 possibilities; considering the symmetry of the matrix we have a total of 4 possibilities for $P_j$ if $P_i$ is fixed).
We proceed in the following order: We chose $P_1=(p_1,p_2)$, for which there are at most $N^2$ possibilities.
We further choose $P_2,P_3,\ldots,P_{i-1}=(p_{i-1},p_i)$ with $N$ possibilities whenever we encounter a `new' equivalence class and 4 possibilities otherwise.  As $1\sim j$, we have only 4 possibilities for $P_j$. Now, we proceed further with $P_{j-1},P_{j-2},\ldots, P_{i+1}$ in the same way as before. Then we already determined $p_i$ and $p_{i+1}$ (and hence $P_i$). We continue in the same way with the remaining points. Hence, we had $N$ possibilities for $p_1$ and  $N$ possibilities for each of the $\frac{k}{2}$ equivalence classes, except for the equivalence class containing $i$. Hence, we had $CN^{\frac{k}{2}}$ possible choices, which proves (\ref{count_cross}).

It remains to show that
\begin{equation}
\label{count_non_cross}
 m(n_1,\ldots, n_{k-1},\pi)=o(N^{k/2+1}), \quad \text{ if } \pi \in \mathcal{PP}(k) \text{ is  non-crossing}.
\end{equation}
We recall that   $n_1 \neq 0$.  We observe that each non-crossing partition has a block of the form $\{i,i+1\}$, since for any $i \sim j, j>i+1 $ there exist $i<i'<j'<j$ with $i'\sim j'$. If the associated $n_i$ value is zero, we have $p_i=p_{j+1}$ and hence we can first determine the sequence $P_1,P_2,\ldots, P_{i-1}, P_{i+2}, \ldots$ and obtain the original sequence by further $N$ choices for $p_i$.  After `eliminating' $P_i, P_{i+1}$ we obtain again a non-crossing partition (we relabel the indices accordingly). We can repeat this elimination procedure, say for $r$ steps, until we arrive at (again after relabeling)
$ (Q_1, Q_2,\ldots , Q_{k-2r})$, with $Q_i=(q_i, q_{i+1})$  and a pair partition $\pi'$ on $\{1,\ldots, k-2r\},$
where all values of $n_i$ associated to partition blocks $\{i,i+1\}$ strictly larger than zero.
We can restore $P_1,\ldots, P_k$ from $(Q_1, Q_2,\ldots , Q_{k-r})$ by $N^{r}$ choices.
Let $\{i,i+1\}$ be a partition block of $\pi'$ associated to some $n_j>0$ (such a block exists as $n_1>0$).
As before we choose $q_i, q_{i-1},q_{i-2},\ldots, q_{i+2}$, for which there are $C N^{1+\frac{k}{2}-r-1}$ possibilities ($N$ possibilities for $q_i$, then $N$ possibilities for each of the $\frac{k}{2}-r$ equivalence classes except the equivalence class $\{i,i+1\}$). For $q_{i+1}$ there are by Assumption \ref{schranke_eintr_1} only $o(N)$ possibilities. Together with the $N^{r}$ possibilities to obtain $P_1,\ldots, P_k$ from $(Q_1, Q_2,\ldots , Q_{k-r})$, this proves the claim in \eqref{count_non_cross} and hence completes the proof of statement (i).

We recall, that  statement (ii) reads
\begin{equation*}
\lim_{N \to \infty} \frac{1}{N^{k +2}} \sum_{P \in S_k, Q \in S_k} \mathbb E[X_N(P) X_N(Q)]- \mathbb E[X_N(P)] \mathbb E [X_N(Q)]=0.
\end{equation*}
The main idea is to use \eqref{est_corr_combined}, which is a consequence of \eqref{Beh_EWert} and \eqref{est_corr}. Hence we introduce the following notation for $P,Q \in S_k$
\begin{equation*}
d(P,Q)\coloneqq \min_{1\leq i,j \leq k}\|P_i- Q_j \|_{\varphi_N}.
\end{equation*}
Further we write $[P,Q] \coloneqq (P_1,\ldots, P_k,Q_1,\ldots,Q_k)$ and for a pair partition $\pi \in \mathcal{PP}(2k)$, we say that $[P,Q] \in S_k \times S_k$ with $(l_1,\ldots,l_{2k})=G_N([P,Q])$ (see (\ref{sort}))  is $\pi$-consistent (we write $[P,Q]\in S_{2k}(\pi)$) , if
\begin{equation*}
 i\sim_\pi j \Leftrightarrow \{ \varphi_N^{(-1)}[P,Q]_i,\varphi_N^{(-1)}([P,Q]_j)\} = \{l_n,l_{n+1}\} \quad \text{for some odd }n.
\end{equation*}
Analogously to the definition of $m$ in the proof of (i), for $n_1,\ldots, n_{2k}, \delta  \in \mathbb N$ and $\pi \in \mathcal{PP} (2k)$, we set
\begin{multline*}
  M(n_1,n_3,\ldots, n_{2k-1}, \delta, \pi) 
 \coloneqq \\ \# \{[P,Q] \in S_{2k}(\pi) :    (G_N([P,Q]))_{j+1}-(G_N([P,Q]))_{j}=n_j, j=1,3,5\ldots
\\  \text{ and } d(P,Q)=\delta\}
\end{multline*}
By \eqref{est_corr_combined}
there are constants $C>0$ and $\beta \in [0,1)$ such that
\begin{align*}
  & \left|  \frac{1}{N^{k +2}} \sum_{P \in S_k, Q \in S_k} \mathbb E[X_N(P) X_N(Q)]- \mathbb E[X_N(P)] \mathbb E [X_N(Q)] \right|
  \\
 \le & C\frac{1}{N^{k +2}} \sum_{\pi\in \mathcal{PP}(2k)} \sum_{n_1, n_3, \ldots, \delta \geq 0} M(n_1,n_3\ldots, n_{2k-1},\delta, \pi) \beta^{n_1+n_3+\ldots + n_{2k-1}+ \delta}.
 \end{align*}
 Again, by the finiteness of the geometric series, it suffices to show that for any $n_1,n_3\ldots, n_{2k-1},\delta, \pi$
 \begin{equation}
 \label{lim_M}
  M(n_1,n_3\ldots, n_{2k-1},\delta, \pi)=o(N^{k +2}).
 \end{equation}
 For given $n_1,n_3\ldots, n_{2k-1},\delta, \pi$, we estimate the number of elements $[P,Q]$ that contribute to $M(n_1,n_3\ldots, n_{2k-1},\delta, \pi)$ for which $\delta=d(P,Q)=\|P_{i'}- Q_{j'} \|_{\varphi_N}$, that means the minimal distance is obtained between $P_{i'}$ and $Q_{j'}$ (the number of such  possible indices $i'$ and $j'$ depends on $k$ only and may hence be neglected for the consideration of $N \to \infty$).
When choosing $[P,Q]$ we proceed in the following order: We start with $P_{i'}$ and first fix all values $P_{i'+1}, \ldots, P_{k}, P_1, \ldots, P_{i'-1}$. Then we proceed with $Q_{j'}$ and the remaining values of $Q$, i.e.~$Q_{j'+1}, \ldots, Q_k,Q_1,\ldots, Q_{j'-1}$. As before, we have $N^2$ choices for the starting point $P_i$ and $N$ choices whenever we encounter a 'new' equivalence class, otherwise there is only a constant number of choices. There are $k$ equivalence classes, resp. $k-1$ 'new' equivalence classes, once $P_{i'}$ is fixed.
Here it is crucial, that when we fixed all values of $P$, we have only a constant number of choices for $Q_{j'}$ (by the restriction $\delta=\|P_{i'}- Q_{j'} \|_{\varphi_N})$, i.e. the $CN^2$ possibilities for $(P_{i'}, Q_{j'})$ provide a starting point for both $P$ and $Q$ rather than having $N$ further possibilities for $Q_{j'}$.
This shows \eqref{lim_M} and hence completes the proof of the main theorem.
 \end{proof}

It remains to prove Proposition  \ref{non_conv}, which states that under certain conditions on the mixed moments of the stochastic process, the fourth moment of the trace does not converge to the fourth moment of the semicircle law, i.e. we show that for some $C>0$ and $N $ large enough we have 
\begin{equation*}
\mathbb E \left( \frac{1}{N} \text{tr} A_N^4 \right) \geq \kappa_2 +C.
\end{equation*}
\begin{proof}[Proof of Proposition \ref{non_conv}]
We need to estimate
$$
\frac{1}{N^{3}} \sum_{P \in S_4} \mathbb E \left[ X_N(P_1) X_N(P_2) X_N(P_3) X_N(P_4) \right]
$$
from below.
As in the proof of Lemma \ref{momentlemma}, terms of the form $\mathbb E(X_N(P_1)^4)$  give a vanishing contribution to the sum as $N \to \infty$.
Recall that  by the assumption of the proposition we have
\begin{equation*}
\mathbb E (Z_{i_1} Z_{i_2} Z_{i_3}Z_{i_4}  ) = C \beta^{ n_1+n_3}
, \quad
 \mathbb E (Z_{i_1}^2 Z_{i_2}^2)=1
\end{equation*}
for some $0<\beta <1$.
Hence, as again $\frac{\# \{P: P \in S_4^0\}}{N^3} \to \kappa_2$ for $N\to\infty$,
we obtain for any $\varepsilon >0$ and $N$ sufficiently large
\begin{align}
  &\frac{1}{N^{3}} \sum_{P \in S_4} \mathbb E \left[ X_N(P_1) X_N(P_2) X_N(P_3) X_N(P_4) \right] - \kappa_2
   \nonumber
   \\ \geq& C\frac{1}{N^{3}} \sum_{\pi \in \mathcal{PP}(k)} \sum_{n_1, n_3 \geq 0, \exists i: n_i \neq 0} m(n_1, n_{3}, \pi) \beta^{n_1+n_3} - \varepsilon
   \nonumber \\
   \geq & C\frac{1}{N^{3}}   m(1, 1, \pi') \beta^{2}- \varepsilon
   \label{est_fourth_moment}
 \end{align}
for $\pi'=\{\{1,2\}, \{3,4\}\}$. Observe that the estimate in \eqref{est_fourth_moment} is justified as $\beta>0$.
We have to estimate (from below) the number of possible choices for $p_1,\ldots, p_4$ with $\| P_1-P_2\|_{\varphi_N}=  \| P_3-P_4\|_{\varphi_N}=1$ (see Fig.~\ref{fig_2}). We will show that this number is larger than $CN^3$ for some positive constant $C$. We can assume that $p_3-p_1=1$ (this further reduces the number of choices). Then $P_1,P_2$ resp.~$P_3,P_4$ are neighbors according to Definition \ref{Def_filling}.
\begin{figure}
 \begin{center}
 \begin{tikzpicture}[scale=.6]
\filldraw [gray]
(0,0) circle (3pt) node[anchor=east] {$p_1$}
(0,4) circle (3pt)node[anchor=east] {$p_2$}
(4,4) circle (3pt)node[anchor=west] {$p_3$}
(4,0) circle (3pt)node[anchor=west] {$p_4$}
;
 \draw (0,0) -- (0,4) -- (4,4) -- (4,0)--(0,0) ;
\draw[dotted] (0,2) .. controls (2,2) .. (2,4) node[midway, above]{$1$};
\draw[dotted] (4,2) .. controls (2,2) .. (2,0) node[midway, below]{$1$};
\end{tikzpicture}
\end{center}
\caption{The number of possible values for $p_1,\ldots, p_4 \in \{1, \ldots, N\}$ such that   $\| P_1-P_2\|_{\varphi_N}=  \| P_3-P_4\|_{\varphi_N}=1$ (indicated by the dotted lines) is larger than $CN^3$ for some  $C>0$.}
\label{fig_2}
\end{figure}
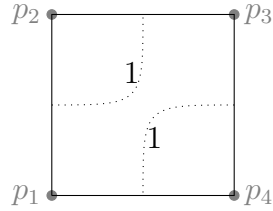
We set
\begin{multline*}
x_i \coloneqq \# \{j: (\varphi_N(j))_2=i, (\varphi_N(j+1))_2=i+1 \text{ or }\\
\shoveright{(\varphi_N(j))_2=i+1, (\varphi_N(j+1))_2=i\}\phantom{.}}\\
\shoveleft{y_i \coloneqq \# \{j: (\varphi_N(j))_1=i, (\varphi_N(j+1))_1=i+1 \text{ or }}\\
(\varphi_N(j))_1=i+1, (\varphi_N(j+1))_1=i\}.
\end{multline*}
Here, $x_i$ is the number of horizontal neighbors in the $i$-th and $i+1$-st column (in the upper triangular matrix) with distance one (according to $\|\cdot \|_{\varphi_N}$). Analogously, $y_i$ is the number of vertical neighbors  in the $i$-th and $i+1$-st row (in the upper triangular matrix) with distance one (according to $\|\cdot \|_{\varphi_N}$).
For each choice of $p_1,p_3$, say $p_1=i, p_3=i+1$, we have $(x_i+y_i)^2$ possibilities for $(p_2,p_4)$.
We observe that by the assumption of the lemma we have
$$ \sum_{i=1}^{N-1} x_i+y_i \geq C N^2.$$
By the Cauchy-Schwarz inequality we have
$$m(1,1, \pi') \geq\sum_i (x_i+y_i)^2 \geq \frac{1}{N} \left( \sum_i  x_i+y_i\right)^2 \geq C N^3.$$
Inserting this estimate into (\ref{est_fourth_moment}) proves the claim.
\end{proof}


\end{document}